\pgfplotsset{width=15cm}
 \newcommand{\field}[1]{\mathbb{#1}}
\newcommand{\Z}{\field{Z}}
\newcommand{\N}{\field{N}}
\newcommand{\Mypm}{\mathbin{\tikz [x=1.4ex,y=1.4ex,line width=.1ex] \draw (0.0,0) -- (1.0,0) (0.5,0.08) -- (0.5,0.92) (0.0,0.5) -- (1.0,0.5);}}%
\renewcommand{\vec}{\boldsymbol}
\newtheorem{thm}{Theorem}[section]
\newtheorem{lemma}[thm]{Lemma}
\newtheorem{prop}[thm]{Proposition}
\newtheorem{cor}[thm]{Corollary}
\newenvironment{remark}[1][Remark]{\begin{trivlist}
\item[\hskip \labelsep {\bfseries #1}]}{\end{trivlist}}
\title[Hyperbinary Expansions]{On a Graph Connecting Hyperbinary Expansions}
\author{Maurizio Brunetti}
\author{Alma D'Aniello}
\begin{document}
\address{\, \newline
Dipartimento di Matematica e applicazioni `R. Caccioppoli',\newline
Universit\`a di Napoli `Federico II',\newline
 Piazzale Tecchio 80,   I-80125 Napoli, 
Italy. \newline \newline
 E-mail: mbrunett@unina.it, daniello@unina.it}
 \subjclass[2010]{05C30, 11B75, 68Q42, 68R15}
\renewcommand{\abstractname}{Abstract}
\begin{abstract}
Le $n$ be any positive integer. A hyperbinary expansion of $n$ is a representation of $n$ as sum of powers of $2$, each power being used at most twice. In this paper we study some properties of a suitable edge-coloured and vertex-weighted oriented graph $A(n)$ whose nodes are precisely the several hyperbinary representations of $n$. In particular, we identify those integers $m \in \N$ such that the fundamental group of $A(m)$ is abelian.
\end{abstract}

\maketitle
\section{Introduction}
A hyperbinary expansion of the positive integer $n$ is a word 
$x_1 \dots x_k$ with
$x_i \in \{ 0,1,2\}$, $x_1\not=0$, and
$n = \sum_{i=1}^k x_i 2^{k-i}$.

The last decades have seen a growing interest toward hyperbinary expansions, especially since N.~Calkin and  H.~S.~Wilf proved in \cite{CW} that all positive rationals appear just once in the sequence
$$ \Big\{ \frac{b(n)}{b(n+1)} \Big\}_{n\geq 0},$$ 
where $b(0)=1$, and $b(n)$ for $n >0$ is the number of the hyperbinary expansions of $n$.  In any case, many intriguing properties of the function
$ b : \N_0 \rightarrow \N $ had been already examined in \cite{R}, where $b(n)$ is called {\em the $3$-rd binary partition function} and denoted by $b(3;n)$.

More recently, A.~De~Luca and C.~Reutenauer explained in \cite{AR} how hyperbinary expansions are related to the nodes of the Christoffel tree, first introduced in \cite{BdL}. For their part, hyper-$m$-ary expansions for $m \geq 2$ have been considered in \cite{M}.

Let  $\mathcal H(n)$ be the set of all hyperbinary expansions of a fixed $n \in \N$.
As explained in Section~\ref{s2},  such set contains a unique expansion $\vec{n}'$ not containing $0$'s.  We refer to $\vec{n}'$ as the {\em minimal hyperbinary expansion} of $n$, since any other element in $\mathcal H(n)$ is bigger than $\vec{n}'$ with respect to the so-called shortlex ordering (see Corollary~\ref{conf} below). 

On the other hand, the maximal expansion $\vec{n}'' \in \mathcal H(n)$ with respect to shortlex ordering turns out to be the unique binary expansion of $n$. 

The directed graph $A(n) = (\mathcal H(n), \mathcal E(n))$  we are going to introduce in Section~\ref{s3} has $\vec{n}'$  as root, and $\vec{n}''$ as unique terminal node. This  is one more reason  to consider $\vec{n}'$  and $\vec{n}''$ as the {\em extrema} of  $\mathcal H(n)$.

Among its features, the graph $A(n)$ induces a new partial order on  $\mathcal H(n)$ related to 
the number of ancestors of a fixed vertex $\vec{n}$ in  $A(n)$, giving an alternative significant method to measure how far $\vec{n}$  is from being binary or minimal hyperbinary.

The graph  $A(n)$ is also useful to visualize some properties of expansions of longest length. We discuss this topic in Section \ref{s4}.

The final two sections of this paper are devoted to identify those integers giving rise to graphs with small cyclomatic numbers.

\section{Basic Properties and Tools}\label{s2}
To introduce our basic tools, we mix up some standard terminology and notations borrowed from the theory of formal languages and the theory of directed graphs. Our main sources for them are \cite{BKR} and  \cite{BO}.

Let $\Sigma^*$ be the free monoid over the {\em alphabet} $\Sigma = \{ 0,1,2 \}$. The elements of $\Sigma^*$ are called strings or words. The trivial element in $\Sigma^*$ is the {\em empty string}. Once we introduce the equivalence relation $\sim$ that identifies two words in $\Sigma^*$ differing only in zeros in the left-hand side, each hyperbinary expansion in  $\mathcal H = \cup_{n>0} \mathcal H(n)$ can be regarded as a suitable equivalence class in $\Sigma^*/\sim$. From such perspective the three words $00210$, $0210$ and $210$ in $\Sigma^*$   all represent the same element in $\mathcal H (10)$.

We now consider a {\em string-rewriting system} 
$$\vec{R} = \{ (02, \,  10), \; (12, \, 20)  \} \subset \Sigma^* \times \Sigma^*.$$
We call its elements {\em rewrite rules}. An alternative way to denote them is
$ 0  2 \rightarrow 1 0$ and $1  2 \twoheadrightarrow 2  0$.
We also use the same type of arrows to denote any other {\em single-step reduction} induced by $\vec{R}$, i.e.\ 
\begin{equation}\label{rew} \vec{x} \,0 \, 2 \, \vec{y} \rightarrow \vec{x} \,1 \, 0 \, \vec{y}  \qquad \text{and} \qquad \vec{x} \,1 \, 2 \, \vec{y} \twoheadrightarrow \vec{x} \,2 \, 0 \, \vec{y} \qquad \forall \; \vec{x}, \vec{y} \in \Sigma^*. 
\end{equation}
Suppose now $\vec{u}$ and $\vec{v}$ in $\Sigma^*$ connected by a finite number $k>0$ of single-step reductions. In this case, we write 
\begin{equation}\label{rel}  \vec{u}  \overset{*}{\rightarrow}_{\vec{R}} \vec{v}. \end{equation} 
The word  $\vec{u}$ is called an {\em ancestor of $\vec{v}$}, and $\vec{v}$ is {\em a descendant of  $\vec{u}$}.
Furthermore, if $k=1$, we also say that $\vec{u}$ is a {\em parent of $\vec{v}$}, and $\vec{v}$ a {\em child of  $\vec{u}$}.

We point out that in \cite{BKR} and \cite{BO} the authors  denote by $\overset{*}{\rightarrow}_{\vec{R}} $ the reflexive, transitive closure of $\vec{R}$; on the contrary, the binary relation \eqref{rel} denoted here by the same symbol is just the transitive closure of $\vec{R}$, but it is not reflexive. 

Note that $\vec{R}$ naturally defines a rewriting system  on $\mathcal H = \cup_{n>0} \mathcal H(n)$ (denoted by $\vec{R}_{\mathcal H}$), with the caveat that the hyperbinary expansion $2 \, x_1\cdots x_k$, also represented by $0 \, 2 \, x_1\cdots x_k$ in $\Sigma^*$, is a parent of $1 \, 0 \, x_1\cdots x_k$ with respect to $\vec{R}_{\mathcal H}$.

\begin{prop}\label{p4} Let $\vec{n}$ be any element in $\mathcal H(n)$. All ancestors and descendants of $\vec{n}$ with respect to $\vec{R}_{\mathcal H}$  belong to  
 $\mathcal H(n)$.
\end{prop}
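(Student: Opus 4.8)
The plan is to reduce everything to a single invariant: a single-step reduction never changes the integer a word evaluates to, and this invariance then propagates along chains of reductions. For a word $\vec{w}=x_1\cdots x_k$ put $v(\vec{w})=\sum_{i=1}^k x_i 2^{k-i}$; this is well defined on $\Sigma^*/\!\sim$ because prepending zeros leaves the sum unchanged, and by construction $\vec{w}$ represents an element of $\mathcal H(n)$ exactly when $v(\vec{w})=n$. First I would verify the two rules locally. If $\vec{w}$ has the factor $0\,2$ at positions $j,j+1$, those two letters contribute $0\cdot 2^{k-j}+2\cdot 2^{k-j-1}=2^{k-j}$, while after the rewrite the factor $1\,0$ contributes $1\cdot 2^{k-j}+0\cdot 2^{k-j-1}=2^{k-j}$; likewise the factor $1\,2$ contributes $2^{k-j}+2^{k-j}=2^{k-j+1}$ and its rewrite $2\,0$ contributes $2\cdot 2^{k-j}=2^{k-j+1}$. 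Since both rules preserve the length of the word, all letters outside the rewritten factor keep their positions, hence their place values, so every single-step reduction $\vec{u}\rightarrow\vec{v}$ in $\Sigma^*$ satisfies $v(\vec{u})=v(\vec{v})$; passing to $\sim$-classes, the same holds for the single-step reductions induced by $\vec{R}_{\mathcal H}$, including the flagged instance $2\,x_1\cdots x_k\,(=0\,2\,x_1\cdots x_k)\rightarrow 1\,0\,x_1\cdots x_k$.

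Next I would lift this to the transitive closure by an immediate induction on the number $k>0$ of single-step reductions connecting two words: the base case $k=1$ is the paragraph above, and the inductive step splits a chain of length $k$ into a chain of length $k-1$ followed by one further reduction. Consequently, if $\vec{m}$ is any ancestor or descendant of $\vec{n}\in\mathcal H(n)$ with respect to $\vec{R}_{\mathcal H}$, then $v(\vec{m})=v(\vec{n})=n$.

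It then remains to check that $\vec{m}$ is genuinely a hyperbinary expansion, i.e.\ an element of $\mathcal H$. This is built into the shape of the rules: the right-hand sides $1\,0$ and $2\,0$ introduce only letters of $\Sigma=\{0,1,2\}$, so after any number of reductions every letter still lies in $\{0,1,2\}$ — which is precisely the requirement that no power of $2$ be used more than twice — and, since the elements of $\mathcal H$ are $\sim$-classes, the condition that the leading letter be nonzero is automatically met once one notes that $v(\vec{m})=n\geq 1$ forces $\vec{m}$ to contain a nonzero letter. Hence $\vec{m}\in\mathcal H(n)$, which is the assertion.

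I expect the only point requiring a little care — more a bookkeeping matter than a genuine obstacle — to be the compatibility of $\vec{R}_{\mathcal H}$ with the relation $\sim$: one must make sure that reductions applied to different representatives of the same $\sim$-class yield the same $\sim$-class, and that the flagged instance above is consistent with this. Both follow from the fact already used, namely that $v$ is invariant under $\sim$ and under both rewrite rules, so there is nothing deeper to prove.
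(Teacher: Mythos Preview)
Your argument is correct and is essentially the paper's own proof, only written out in more detail: the paper simply records the two identities $0\cdot 2^{s}+2\cdot 2^{s-1}=1\cdot 2^{s}+0\cdot 2^{s-1}$ and $1\cdot 2^{s}+2\cdot 2^{s-1}=2\cdot 2^{s}+0\cdot 2^{s-1}$ and declares the rest immediate, whereas you make explicit the induction on the length of the reduction chain and the routine checks that the result stays in $\mathcal H$.
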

\begin{proof}
The statement immediately comes from the two trivial arithmetic identities
$$ 0 \cdot 2^{s}+ 2 \cdot 2^{s-1} = 1 \cdot 2^{s} + 0 \cdot 2^{s-1} \quad \text{and} \quad 1 \cdot 2^s + 2 \cdot 2^{s-1} = 2 \cdot 2^s + 0 \cdot 2^{s-1} \qquad \forall \; s \in \N.
$$
\end{proof}

We now recall a renowned result concerning the function
\begin{equation}
b : n \in \N_ 0 \longmapsto \begin{cases} 0 \quad  \qquad \;  \text{if $n=0$;}\\
\lvert \mathcal H(n) \rvert \quad \text{otherwise.}
\end{cases}
\end{equation}
\begin{prop}\label{pr1}
The sequence $(b(n))_{n \geq 0}$ may be defined recursively by
\begin{equation}
b(0)=b(1)=1; \quad b(2n+1)=b(n); \quad b(2n)= b(n)+b(n-1).
\end{equation}
\end{prop}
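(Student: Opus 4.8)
The plan is to sort the hyperbinary expansions of $n$ by their final letter. The last letter $x_k$ of any word $x_1\cdots x_k\in\mathcal H(n)$ satisfies $n=\sum_{i=1}^{k}x_i2^{k-i}\equiv x_k\pmod 2$, so its parity is dictated by that of $n$; deleting $x_k$ then produces a shorter word whose digit sum identifies it as a hyperbinary expansion of a strictly smaller integer. Carrying this out case by case yields the stated recursions, once we agree --- as the statement tacitly does --- to set $b(0)=1$ by declaring the empty word to be the unique hyperbinary expansion of $0$, a convention compatible with viewing expansions as classes in $\Sigma^*/\!\sim$.

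For the initialisation, the only expansion of $1$ is the one-letter word $1$, so $b(1)=1$; and $b(0)=1$ holds by the convention just made. For the odd case, if $x_1\cdots x_k\in\mathcal H(2n+1)$ then $x_k$ is odd, hence $x_k=1$ since $x_k\in\{0,1,2\}$; the assignment $x_1\cdots x_k\mapsto x_1\cdots x_{k-1}$ (the empty word being obtained when $k=1$, i.e.\ when $n=0$) has inverse ``append the letter $1$ on the right'' and is therefore a bijection from $\mathcal H(2n+1)$ onto $\mathcal H(n)$, giving $b(2n+1)=b(n)$. One checks in passing that the condition $x_1\neq 0$ is preserved whenever $k\ge 2$.

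For the even case with $n\ge 1$, the letter $x_k$ of any $x_1\cdots x_k\in\mathcal H(2n)$ is even, so $x_k\in\{0,2\}$, and these two options partition $\mathcal H(2n)$. Deleting a trailing $0$ gives a word with digit sum $2n$, that is, an element of $\mathcal H(n)$; deleting a trailing $2$ gives a word with digit sum $2n-2=2(n-1)$, that is, an element of $\mathcal H(n-1)$. Both maps are bijections (their inverses append $0$, resp.\ $2$, on the right), so adding the two cardinalities gives $b(2n)=b(n)+b(n-1)$.

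The only genuinely delicate points are the length-one boundary cases and the need for the convention $b(0)=1$; an alternative route that sidesteps the empty word entirely is to use the generating identity $F(x):=\sum_{m\ge 0}b(m)x^m=\prod_{j\ge 0}\bigl(1+x^{2^j}+x^{2^{j+1}}\bigr)$, observe that factoring out the $j=0$ term yields $F(x)=(1+x+x^2)\,F(x^2)$, and then read off the recursion by comparing the coefficients of $x^{2n}$ and of $x^{2n+1}$ on the two sides.
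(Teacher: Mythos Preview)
Your argument is correct and coincides with the paper's approach: the paper defers the proof to its Propositions~\ref{10} and~\ref{11}, whose maps $\psi,\phi',\phi''$ (append $1$, $0$, or $2$ on the right) are precisely the inverses of your delete-the-last-letter bijections, and your treatment of the empty word matches the convention $b(0)=1$ used there. One minor wording quibble: when you write ``digit sum $2n$'' and ``digit sum $2(n-1)$'' you mean the weighted value $\sum_i x_i 2^{k-i}$, not the actual digit sum $\sum_i x_i$; the intended computation is nevertheless clear and correct.
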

\begin{proof} See \cite[Corollary~2.7]{N}. For the last two equalities, see also Propositions~\ref{10} and~\ref{11} below.
\end{proof}
Proposition~\ref{pr1} shows that the sequence $(b(n))_{n \geq0}$ is strictly related to the well studied 
Stern's diatomic sequence $(s(n))_{n \geq 0}$, i.e.\ the sequence A002487 of N.J.A. Sloane's Encyclopedia (see \cite{S}) also known as Stern-Brocot sequence. More precisely, we have
$$b(n)= s(n+1).$$
\begin{cor}\label{co1} The positive integer $n$ admits only one hyperbinary expansion if and only if $n=2^k-1$ for a suitable $k \in \N$.
\end{cor}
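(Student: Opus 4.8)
The plan is to read off the statement directly from the recursion in Proposition~\ref{pr1}. Since $b(n) = \lvert \mathcal H(n)\rvert$ for $n > 0$, the claim is that $b(n) = 1$ exactly when $n = 2^k - 1$. First I would establish the ``if'' direction by a short induction on $k$: the base case $k = 1$ gives $n = 1$ and $b(1) = 1$; for the inductive step, note that $2^{k+1} - 1 = 2(2^k - 1) + 1$ is odd, so the rule $b(2m+1) = b(m)$ with $m = 2^k - 1$ gives $b(2^{k+1}-1) = b(2^k - 1) = 1$ by the inductive hypothesis.

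For the ``only if'' direction I would argue by strong induction on $n$, or equivalently by descent. Suppose $b(n) = 1$ with $n \geq 1$. If $n$ is even, write $n = 2m$ with $m \geq 1$; then $b(n) = b(m) + b(m-1)$. Since $b$ takes only positive values on positive integers and $b(0) = 1$, this forces $b(m) = 0$, hence $m = 0$, contradicting $m \geq 1$. (The one exception is handled by observing that $n=2m$ with $m\geq 1$ always yields $b(n)\geq b(m)+b(0)\geq 1+1 = 2$, so no even $n\geq 2$ can have $b(n)=1$.) Therefore $n$ must be odd, say $n = 2m + 1$, and then $b(m) = b(n) = 1$; if $m = 0$ we get $n = 1 = 2^1 - 1$, and otherwise $m < n$ has $b(m) = 1$, so by the inductive hypothesis $m = 2^j - 1$ for some $j$, whence $n = 2(2^j - 1) + 1 = 2^{j+1} - 1$.

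I do not expect any real obstacle here: the only point requiring a moment's care is the positivity of $b$ on $\N$, which is immediate since every positive integer has at least its ordinary binary expansion in $\mathcal H(n)$, together with the boundary value $b(0) = 1$ that makes the even-case analysis go through. The whole argument is a two-line consequence of Proposition~\ref{pr1}.
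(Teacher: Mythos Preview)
Your proposal is correct. The ``if'' direction is identical to the paper's. For the ``only if'' direction you take a genuinely different route: you argue purely from the recursion of Proposition~\ref{pr1} by descent, observing that any even $n=2m$ with $m\geq 1$ satisfies $b(n)=b(m)+b(m-1)\geq 2$, while an odd $n=2m+1$ with $b(n)=1$ forces $b(m)=1$ and hence, inductively, $m=2^j-1$. The paper instead gives a constructive argument: for $n$ not of the form $2^k-1$, it locates the leftmost $0$ in the binary expansion $\vec{n}''$ and explicitly writes down a parent of $\vec{n}''$ in $\mathcal H(n)$, thereby exhibiting a second hyperbinary expansion. Your approach is shorter and stays entirely within the arithmetic of $b$; the paper's is more hands-on and meshes with the rewriting-system machinery being developed. (Indeed, the paper remarks immediately after its proof that a shorter inductive argument exists.)

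One expository wrinkle: your first pass at the even case (``this forces $b(m)=0$, hence $m=0$'') is garbled, and the parenthetical that follows writes $b(m)+b(0)$ where you mean $b(m)+b(m-1)$. The clean statement is simply that $b(k)\geq 1$ for every $k\geq 0$ (the binary expansion witnesses this for $k\geq 1$, and $b(0)=1$ by Proposition~\ref{pr1}), so $b(2m)=b(m)+b(m-1)\geq 2$ whenever $m\geq 1$. Tidy that sentence and the proof is complete.
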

\begin{proof}
Note first that $b(1) = b (2^1-1)=1$. Suppose $n$ be equal to $2^k-1$ for some $k>1$. 
Since  $n=2(2^{k-1}-1)+1$, by Proposition~\ref{pr1} and induction on $k$ it follows that
$$ b(n) = b (2^{k-1}-1)=1.$$
Let now $n$ be equal to $2^k-1-j$ for some positive $j<2^k-2^{k-1}$, and let $\vec{n}''= x_1 \cdots x_k$ be the unique binary expansion of $n$. 
In this case,  the set  $\{ i \, \vert \;  x_i=0 \}$ is not empty, and we denote by $h$ its minimum. 
By definition, 
\begin{equation} \vec{n}'' = \underbrace{11 \cdots 1}_{h\text{-}2 \; \text{times}}\;  1 \; 0 \; x_{h+1} \cdots x_k.
\end{equation}
Such expansion has a parent  given by $2x_3\cdots x_k$ if $h=2$, and by  \begin{equation} \underbrace{11 \cdots 1}_{h\text{-}2 \; \text{times}}\;  0 \; 2 \; x_{h+1} \cdots x_k
\end{equation}
if instead $h >2$. In both cases the cardinality of $\mathcal H(2^k-1-j)$ is not less than $2$.
\end{proof}
For sake of completeness, we inform the reader that Corollary~\ref{co1}  could also be proved inductively in a shorter  (though less concrete) way from the striking equality
$$ b(2^k-1-j) = b(2^{k-1}-1+j) \quad \text{if $0 \leq j<2^k$} $$
noted by  S.~Northshield in \cite{N}.
\begin{prop}\label{pr2}
Each $n\in 	N$ admits a unique hyperbinary expansion without $0$'s. \end{prop}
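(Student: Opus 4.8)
The plan is to obtain existence and uniqueness \emph{simultaneously} by means of an explicit change of alphabet that converts a zero-free hyperbinary expansion into an ordinary binary string. First I would observe that a word $x_1\cdots x_k$ with every $x_i\in\{1,2\}$ represents $n$ precisely when, writing $x_i=1+\varepsilon_i$ with $\varepsilon_i\in\{0,1\}$, one has
\[
n=\sum_{i=1}^k x_i 2^{k-i}=(2^k-1)+\sum_{i=1}^k \varepsilon_i 2^{k-i}.
\]
Thus a zero-free expansion of $n$ of length $k$ is literally the same datum as a binary string $\varepsilon_1\cdots\varepsilon_k$ (leading zeros allowed) representing the integer $m:=n-2^k+1$.

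The second step is to pin down the length. From the displayed identity, together with $0\le\sum_{i=1}^k\varepsilon_i2^{k-i}\le 2^k-1$, a zero-free expansion of $n$ of length $k$ can exist only when $2^k-1\le n\le 2^{k+1}-2$. Since the intervals $[\,2^k-1,\,2^{k+1}-2\,]$ for $k\ge 1$ partition $\N$, this forces $k$ to be the single value determined by $n$; call it $k(n)$.

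The third step is routine. For $k=k(n)$ the integer $m=n-2^k+1$ lies in $[0,2^k-1]$ and so has a unique representation $m=\sum_{i=1}^k\varepsilon_i2^{k-i}$ with $\varepsilon_i\in\{0,1\}$, namely its ordinary binary expansion padded with leading zeros to length $k$. Setting $x_i:=1+\varepsilon_i$ produces a word with all digits in $\{1,2\}$, in particular $x_1\neq 0$, hence a genuine hyperbinary expansion of $n$: this gives existence, and the chain of equivalences ``zero-free expansion of $n$ $\leftrightarrow$ length $k(n)$ $\leftrightarrow$ binary string of length $k(n)$ for $m$'' gives uniqueness.

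I do not expect a genuine obstacle here; the only point needing care is the bookkeeping in the second step, namely checking that the intervals $[\,2^k-1,\,2^{k+1}-2\,]$ really tile $\N$, so that no zero-free expansion of the ``wrong'' length can slip in. An alternative, slightly longer route is strong induction on $n$: the last digit $x_k$ of a zero-free expansion is forced by the parity of $n$ (it equals $2$ when $n$ is even and $1$ when $n$ is odd), and deleting it reduces matters to the zero-free expansion of $(n-x_k)/2$, with the base cases $n=1,2$ immediate. I would record this inductive variant as a remark but carry out the explicit bijection above as the main argument.
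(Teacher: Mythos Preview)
Your argument is correct, and your ``alternative, slightly longer route'' by strong induction on $n$ is in fact precisely the proof that the paper gives: the paper forces the last digit by the parity of $n$, strips it off, and applies the inductive hypothesis to $(n-x_k)/2$ for uniqueness, with a similarly parity-based recursive construction for existence. Your \emph{main} argument, however, is genuinely different and in some ways more informative: the substitution $x_i=1+\varepsilon_i$ sets up an explicit bijection between zero-free hyperbinary expansions of $n$ and ordinary length-$k$ binary strings representing $n-(2^k-1)$, and the tiling of $\N$ by the intervals $[\,2^k-1,\,2^{k+1}-2\,]$ pins down the length $k=k(n)$ in closed form. This buys you, at no extra cost, an explicit description of $\vec{n}'$ and of its length (which the inductive proof does not directly yield), and it makes existence and uniqueness fall out of a single equivalence rather than two separate inductions. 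The paper's route, on the other hand, is slightly more self-contained in that it never leaves the world of hyperbinary words and requires no auxiliary counting of binary strings.
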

\begin{proof} {\em (Existence)} Suppose we have found a hyperbinary expansion without $0$'s for all integers  $k \leq m$. We now describe how to obtain an expansion of required type for $m+1$. If $m$ is even and equal to $2k$, just add a $1$ on the right to the representation of $k$ without $0$'s.
If $m$ is odd, replace by $2$ the last $1$ on the right in the hyperbinary representation of $m$ without $0$'s.

{\em (Uniqueness)} Suppose the uniqueness of hyperbinary expansions without $0$'s proved up to the integer $m-1$, and let 
\begin{equation}\label{e1} x_1 \cdots x_{\ell} \quad \text{and} \quad y_1 \cdots y_{\ell'} \end{equation}
be two elements in $\mathcal H(m)$ without $0$'s.
Since $x_{\ell}$ and $y_{\ell'}$ depend on the parity of $m$, they are necessarily equal. It follows that
\begin{equation}\label{e2} x_1 \cdots x_{\ell -1} \quad \text{and} \quad y_1 \cdots y_{\ell' -1}
\end{equation} are both hyperbinary expansions without $0$'s of the integer $(m-x_{\ell})/2$. By the inductive hypothesis, the two words in \eqref{e2}, and hence the two words in \eqref{e1}, coincide.
\end{proof}

\begin{prop}\label{ac} The only element in $\mathcal H(n)$ with no ancestors with respect to
$\vec{R}_{\mathcal H}$ is the expansion $\vec{n}'$ without $0$'s. The only element in $\mathcal H(n)$ with no descendants is $\vec{n}''$, the binary expansion of $n$. 
\end{prop}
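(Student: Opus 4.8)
The plan is to prove the two assertions separately, each time producing or forbidding a single-step reduction or its inverse.

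For the first claim, one direction is immediate: by Proposition~\ref{pr2} there is a unique expansion $\vec{n}'$ without $0$'s, and by the very shape of the rewrite rules in \eqref{rew}, every single-step reduction turns a pattern $02$ or $12$ into $10$ or $20$ — in either case a $0$ appears in the output. Hence any expansion with an ancestor must contain a $0$, so $\vec{n}'$ has no ancestor. Conversely, I would show that any expansion $\vec{w}\in\mathcal H(n)$ containing a $0$ does have an ancestor. Write $\vec{w}=\vec{x}\,0\,\vec{y}$ where this $0$ is chosen suitably — say the leftmost $0$ that is \emph{not} a leading $0$ — and look at the symbol immediately to its left (which exists and is nonzero since $\vec{w}$ has no leading zero at that position). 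If that symbol is $1$, then $\vec{w}=\vec{x}'\,1\,0\,\vec{y}$ arises from $\vec{x}'\,0\,2\,\vec{y}$ by the first rule; if it is $2$, then $\vec{w}=\vec{x}'\,2\,0\,\vec{y}$ arises from $\vec{x}'\,1\,2\,\vec{y}$ by the second rule. The only thing to check is that the ancestor so produced is a legitimate hyperbinary expansion (no leading zero, digits in $\{0,1,2\}$), and the edge case where the $0$ is in second position with a $2$ before it, which is exactly the caveat in the definition of $\vec{R}_{\mathcal H}$ (the expansion $2\,x_1\cdots x_k$, i.e.\ $0\,2\,x_1\cdots x_k$, being a parent of $1\,0\,x_1\cdots x_k$). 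That $\vec{w}$ has no leading $0$'s except possibly one forced by the rule is handled by the $\sim$-identification.

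For the second claim, again one direction is a direct pattern count. An expansion has a descendant iff it contains an occurrence of $02$ or of $12$, i.e.\ iff some digit $2$ is preceded (allowing a leading $0$ for the first digit) by a digit $<2$. The binary expansion $\vec{n}''$ uses only digits $0$ and $1$, so it contains no $2$ at all, hence no such pattern, hence has no descendant. Conversely, suppose $\vec{w}\in\mathcal H(n)$ is not binary, so some digit of $\vec{w}$ equals $2$; take the \emph{rightmost} such $2$ (this choice is just for definiteness). The digit immediately to its left is either absent — but then $\vec{w}$ begins with $2$, and via the $\sim$-identification $\vec{w}=0\,2\,\vec{y}$ has child $1\,0\,\vec{y}$ — or it is $0$ or $1$, in which case the pattern $02$ or $12$ is present and the corresponding rule applies. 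Either way $\vec{w}$ has a child, hence a descendant.

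The main obstacle, such as it is, is bookkeeping around leading zeros: because elements of $\mathcal H(n)$ are $\sim$-classes rather than literal strings, one must be careful that "contains a $0$" and "contains a $2$ preceded by a smaller digit" are well-defined on classes and that the ancestor/child exhibited is again a bona fide class in $\mathcal H(n)$ — which is where Proposition~\ref{p4} is invoked to guarantee the value is preserved. Once the leading-zero convention is pinned down (as the paper does when defining $\vec{R}_{\mathcal H}$), everything reduces to the elementary observation that the rewrite rules create exactly one $0$ and destroy exactly one $2$ per step, so "has no ancestor" $\Leftrightarrow$ "no $0$'s" $\Leftrightarrow$ $\vec{w}=\vec{n}'$, and "has no descendant" $\Leftrightarrow$ "no $2$'s" $\Leftrightarrow$ $\vec{w}=\vec{n}''$.
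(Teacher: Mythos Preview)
Your approach is essentially identical to the paper's: characterize ``has a parent'' as ``contains a $0$'' and ``has a child'' as ``contains a $2$'', and exhibit the parent (resp.\ child) explicitly by locating a suitable digit. The paper, like you, takes the leftmost $0$ and looks at the symbol to its left.

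There are, however, two small slips in your write-up. First, the edge case you flag is misdescribed: the situation requiring the $\sim$-caveat is when the $0$ is in second position with a \emph{$1$} before it (so $\vec w=1\,0\,\vec y$ and the parent is $0\,2\,\vec y\sim 2\,\vec y$), not with a $2$ before it; when $\vec w=2\,0\,\vec y$ the parent $1\,2\,\vec y$ needs no special handling. Second, for the descendant direction you take the \emph{rightmost} $2$ and then assert that the digit to its left ``is $0$ or $1$'', but this is false in general (e.g.\ in $122$ the digit left of the rightmost $2$ is $2$). The fix is immediate: take the \emph{leftmost} $2$ instead, whose left neighbour (or the implicit leading $0$) is then genuinely in $\{0,1\}$ --- which is exactly what the paper does implicitly. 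With these two corrections your argument matches the paper's.
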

\begin{proof} By Proposition \ref{pr2}, the element $\vec{n}'$ is well defined, and by the rules \eqref{rew} it follows quite easily that an expansion has parents if and only if it has a zero. To operatively find a parent of an expansion $\vec{n}= x_1 \cdots  x_k$ not equal to $\vec{n}'$, we again set $h$  being the minimum of the set $\{ i \, \vert \;  x_i=0 \}$. Since $x_1$ is non-zero by definition, $h$ is at least $2$. The expansion $\vec{n}$ surely comes from a single-step reduction of type `$\rightarrow$' if $x_{h-1}=1$, and of type `$\twoheadrightarrow$' if $x_{h-1}=2$.

From the rewrite rules \eqref{rew} it also follows that the presence of a $2$ is a necessary and sufficient condition for a hyperbinary expansion to have children.
\end{proof}

In literature, a string $\vec{x}$ with no children with respect any set $\vec{S}$ of string-rewriting systems is often called $\vec{S}$-irreducible.

Assumed such notion, Proposition~\ref{ac} can be reworded by saying that  in $\mathcal H(n)$ there exists only one $\vec{R}_{\mathcal H}$-irreducible element: the binary expansion of $n$. In the same set, the expansion  $\vec{n}'$ without $0$'s is the unique $\vec{R}^{-1}_{\mathcal H}$-irreducible element, where $\vec{R}^{-1}_{\mathcal H}$ is the string-rewriting system induced  on 
$\mathcal H$  by 
$$ \vec{R}^{-1} :=  \{  (10, \,  02), \; (20, \, 12)  \} \subset \Sigma^* \times \Sigma^*.$$

As for any other set made by finite sequence of objects, elements in $\mathcal H = \cup_{n>0} \mathcal H(n)$ can be totally ordered by the {\em shortlex ordering} $<_{SL}$ (also known as radix or length-lexicographic ordering): sequences are primarily sorted by length with the shortest sequences first, and sequences of the same length are sorted into lexicographical order. The next Proposition shows that the  string-rewriting system $\vec{R}_{\mathcal H}$ is compatible with the shortlex ordering.
\begin{prop}\label{comp} For any $\vec{u}$ and $\vec{v}$ in  $\mathcal H(n)$ such that  $\vec{u}  \overset{*}{\rightarrow}_{\vec{R}_{\mathcal H}} \vec{v}$, we have
$  \vec{u}  <_{SL} \vec{v}$.
\end{prop}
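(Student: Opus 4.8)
The plan is to reduce the statement to the case of a single-step reduction and then to inspect the two rewrite rules one at a time. Since the relation $\overset{*}{\rightarrow}_{\vec{R}_{\mathcal H}}$ is, by the convention adopted in this paper, exactly the transitive closure of the single-step reductions, and since $<_{SL}$ is a transitive relation, it is enough to prove that $\vec u <_{SL} \vec v$ whenever $\vec v$ is a child of $\vec u$; the general case then follows by chaining finitely many such strict inequalities.

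So let $\vec v$ be obtained from $\vec u$ by one application of a rule of $\vec{R}_{\mathcal H}$. By \eqref{rew}, either $\vec u = \vec x\,0\,2\,\vec y$ and $\vec v = \vec x\,1\,0\,\vec y$, or $\vec u = \vec x\,1\,2\,\vec y$ and $\vec v = \vec x\,2\,0\,\vec y$, for suitable (possibly empty) $\vec x, \vec y \in \Sigma^*$. First I would dispose of the case in which $\vec x$ is empty in the first rule: this is precisely the caveat built into the definition of $\vec{R}_{\mathcal H}$, where $\vec u$, canonically written, equals $2\,\vec y$ and has length $\lvert\vec y\rvert+1$, while $\vec v = 1\,0\,\vec y$ has length $\lvert\vec y\rvert+2$; hence $\vec u <_{SL} \vec v$ because in the shortlex ordering shorter words precede longer ones. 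In every remaining case the rule is length-preserving, and since $\vec u, \vec v \in \mathcal H(n)$ by Proposition~\ref{p4}, neither word acquires or loses a leading zero: if $\vec x$ is nonempty then $\vec x$ begins with a nonzero letter, so $\vec u$ and $\vec v$ are already in canonical form and agree on their first $\lvert\vec x\rvert$ letters. Thus the shortlex comparison is decided by the first position at which $\vec u$ and $\vec v$ differ, namely position $\lvert\vec x\rvert+1$, where $\vec u$ displays a $0$ against a $1$ in $\vec v$ (first rule) or a $1$ against a $2$ (second rule); in both subcases $\vec u <_{SL} \vec v$.

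I do not expect a genuine obstacle here. The only point demanding care is the interplay between the representatives in $\Sigma^*$ and their canonical forms in $\mathcal H$ modulo leading zeros — concretely, the fact that the rule $0\,2 \rightarrow 1\,0$ applied at the very front of a word is the single situation in which the length strictly increases, and it is exactly there that one must invoke the length-first clause of the shortlex ordering rather than its lexicographic clause.
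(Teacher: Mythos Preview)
Your proof is correct and follows essentially the same approach as the paper: reduce to single-step reductions by transitivity, then separate the length-increasing case $2\,\vec y \rightarrow 1\,0\,\vec y$ from the length-preserving cases, where the lexicographic comparison is decided at position $\lvert\vec x\rvert+1$. Your discussion of canonical forms versus representatives with leading zeros is slightly more explicit than the paper's, but the argument is the same.
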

\begin{proof}
It suffices to restrict our attention  to  single-step reductions. They can be of three different types:
\begin{equation}\label{rew2} I) \;\; 2 \, \vec{y} \rightarrow 1 \, 0 \, \vec{y},  \qquad  II) \;\; \vec{x} \,0 \, 2 \, \vec{y} \rightarrow \vec{x} \,1 \, 0 \, \vec{y},  \qquad \text{and} \qquad III) \;\; \vec{x} \,1 \, 2 \, \vec{y} \twoheadrightarrow \vec{x} \,2 \, 0 \, \vec{y},
\end{equation}
where $\vec{x}$ and $\vec{y}$ are suitable words  in $\mathcal H$. For reductions of type I) the parent is shorter than its child. In the other two cases parent and child have the same length, but the former precedes the latter with respect to the lexicographical order.
\end{proof}
Proposition~\ref{comp} has several noteworthy consequences. Some of them concern the so-called confluence (see \cite[p. 11]{BO} for the formal definition) of $\vec{R}_{\mathcal H}$ and $\vec{R}^{-1}_{\mathcal H}$ , and are framed in the next Corollary.
\begin{cor}\label{conf} Let $\vec{n}$ be any fixed element in  $\mathcal H(n)$.\\
i) All $\vec{R}_{\mathcal H}$-reductions starting with $\vec{n}$ end with the binary expansion $\vec{n}''$ of $n$.\\
ii) All $\vec{R}^{-1}_{\mathcal H}$-reductions starting with $\vec{n}$ end with the hyperbinary expansion $\vec{n}' \in \mathcal H(n)$ without $0$'s. \\
iii) The minimal and the maximal element in $\mathcal H(n)$ with respect to the shortlex ordering $<_{SL}$ are the expansions $\vec{n}'$ and $\vec{n}''$ respectively.
\end{cor}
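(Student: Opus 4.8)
The plan is to read Corollary~\ref{conf} as the assertion that $\vec R_{\mathcal H}$ (respectively $\vec R^{-1}_{\mathcal H}$) is a \emph{terminating} rewriting system on $\mathcal H(n)$ possessing a \emph{single} irreducible element, and to derive all three items from Propositions~\ref{p4}, \ref{ac} and~\ref{comp}; no separate verification of local confluence (or appeal to Newman's lemma) is needed.

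First I would prove termination. Fix $\vec n\in\mathcal H(n)$. By Proposition~\ref{p4} every word obtained from $\vec n$ by $\vec R_{\mathcal H}$-reductions still lies in $\mathcal H(n)$, a set of finite cardinality $b(n)$. By Proposition~\ref{comp} each single-step reduction strictly increases the word for $<_{SL}$, so any reduction sequence issuing from $\vec n$ is strictly $<_{SL}$-increasing inside a finite set, hence of finite length; it can be prolonged as long as the current word has a child, so a maximal reduction sequence terminates at a word with no children, i.e.\ at an $\vec R_{\mathcal H}$-irreducible element of $\mathcal H(n)$. By Proposition~\ref{ac} the only such element is the binary expansion $\vec n''$, which gives~i).

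For~ii) I would repeat the argument for $\vec R^{-1}_{\mathcal H}$, using the equivalence $\vec u\overset{*}{\rightarrow}_{\vec R^{-1}_{\mathcal H}}\vec v\iff\vec v\overset{*}{\rightarrow}_{\vec R_{\mathcal H}}\vec u$: by Proposition~\ref{comp} a single-step $\vec R^{-1}_{\mathcal H}$-reduction strictly \emph{decreases} the word for $<_{SL}$, so every maximal reduction sequence from $\vec n$ again terminates inside $\mathcal H(n)$, now at an $\vec R^{-1}_{\mathcal H}$-irreducible element, that is, at the unique element of $\mathcal H(n)$ with no ancestors. By Proposition~\ref{ac} this is $\vec n'$.

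Finally~iii) combines~i) and~ii) with the existence statements contained in Proposition~\ref{ac}. If $\vec m\in\mathcal H(n)$ and $\vec m\neq\vec n''$, then $\vec m$ is not $\vec R_{\mathcal H}$-irreducible, so it has a child; iterating and invoking~i) produces a nonempty reduction $\vec m\overset{*}{\rightarrow}_{\vec R_{\mathcal H}}\vec n''$, whence $\vec m<_{SL}\vec n''$ by Proposition~\ref{comp}, so that $\vec n''$ is the $<_{SL}$-maximum of $\mathcal H(n)$. Dually, if $\vec m\neq\vec n'$ then $\vec m$ has an ancestor, hence by~ii) $\vec n'\overset{*}{\rightarrow}_{\vec R_{\mathcal H}}\vec m$ and $\vec n'<_{SL}\vec m$, so $\vec n'$ is the $<_{SL}$-minimum. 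I expect no genuine obstacle here: the only points requiring care are phrasing termination cleanly (a strictly monotone sequence in a finite totally ordered set is finite) and keeping the direction-reversal between $\vec R_{\mathcal H}$ and $\vec R^{-1}_{\mathcal H}$ consistent throughout.
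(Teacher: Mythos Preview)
Your proof is correct and follows essentially the same route as the paper: termination via strict $<_{SL}$-monotonicity (Proposition~\ref{comp}) inside the finite set $\mathcal H(n)$, combined with the uniqueness of the irreducible elements (Proposition~\ref{ac}), and part~iii) deduced by comparing any $\vec m$ to $\vec n'$ and $\vec n''$ through the reductions just established. The paper's argument is more tersely phrased (it merely notes that the words along a reduction are distinct, giving at most $b(n)-1$ steps), but the underlying logic is identical.
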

\begin{proof} By Proposition~\ref{comp}, the strings $\vec{u}_1, \dots , \vec{u}_k \in \mathcal H (n)$ involved in the  reduction
$$ \vec{u}_1 \rightarrow_{\vec{R}_{\mathcal H}} \cdots \rightarrow_{\vec{R}_{\mathcal H}}  \vec{u}_k $$
are all distinct. It follows that $\vec{u}_1$ necessarily reaches $\vec{n}''$ (the unique $\vec{R}_{\mathcal H}$-irreducible element by Proposition~\ref{ac})  through at most $b(n)-1$ single-step reductions. The argument to prove Part ii) is similar.  The proof of Part iii) also comes from Proposition~\ref{ac}, once you note that, by Proposition~\ref{comp}, the minimal element with respect to $<_{SL}$ has no ancestors, and the maximal one has no descendants.
\end{proof}

\section{Setting up the Graph}\label{s3}
We are now ready to define the oriented graph $A(n) = (\mathcal H(n), \mathcal E(n))$ announced in the Introduction. The set of arcs $\mathcal E(n)$ is precisely given by the set of $\vec{R}_{\mathcal H}$-single-step reductions \eqref{rew2} between hyperbinary expansions in $\mathcal H(n)$, i.e.\ there is an arc from the node $\vec{x} \in \mathcal H(n)$ to $\vec{y}$ if and only if $\vec{y}$ is a child of $\vec{x}$ with respect to $\vec{R}_{\mathcal H}$. The definition of $\mathcal E(n)$ is well-posed by Proposition~\ref{p4}.
  
To each arc in  $\mathcal E(n)$
we assign one of the two following {\em colors}: color `$\rightarrow$'  to $\vec{R}_{\mathcal H}$-single-step reductions in \eqref{rew2} of type I) or II), and color `$\twoheadrightarrow$' to $\vec{R}_{\mathcal H}$-single-step reductions in \eqref{rew2} of type III.

We list below some properties of $A(n)$ implied by definition or by results collected in Section~\ref{s2}. 
\begin{itemize} 
\item The graph $A(n)$ is simple, i.e.\ it has no loops (no node is a parent of itself) or multiple edges (there is at most one arc connecting two nodes). 
\item $\mathcal E(n)$ is empty if and only if $n=2^k-1$ for a suitable $k \in \N$ (see Corollary~\ref{co1}).
\item $A(n)$ has a single root (i.e.\ a node with no ancestors) given by  the minimal hyperbinary expansion $\vec{n}'$ (see Proposition~\ref{ac}).
\item No node in $A(n)$ is an ancestor of itself (see Proposition~\ref{comp}).
\item $A(n)$ is a {\em flowchart}, i.e.\ is {\em pointed accessible} in the sense of \cite[p. 4]{A}: for every node $\vec{n} \not= \vec{n'}$ there exists at least one path from the root to  $\vec{n}$  (see Corollary~\ref{conf}, Part ii) ). 
\item The flowchart $A(n)$ has a {\em global sink}: all paths end to the binary expansion $\vec{n}''$, the unique node without children (see Proposition~\ref{ac} and Corollary~\ref{conf}, Part i) ). 
\end{itemize}
The alphabet $\Sigma$ used to compose words in $\mathcal H$ is a subset of $\N_0$. The map $$ \omega :  x_1 \cdots  x_k \in \mathcal H \longmapsto x_1 + \cdots +   x_k \in \N.$$
is therefore well defined and allows us to weight each node of $A(n)$.

A quick glimpse to $\vec{R}_{\mathcal H}$-single-step reductions \eqref{rew2} is all you need to prove the following Lemma.
\begin{lemma}\label{l1} Let $\vec{v} \in \mathcal H(n)$ be a child of $\vec{u}$. Then 
$ \omega (\vec{v} ) =   \omega (\vec{u} ) - 1.$
\end{lemma}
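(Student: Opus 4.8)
The plan is to reduce everything to the three possible shapes of a single-step reduction listed in \eqref{rew2} and to observe that $\omega$ counts the digit-sum, which changes in a completely transparent way under each rewrite rule. First I would note that $\omega$ is genuinely well defined on $\mathcal H$ (not merely on $\Sigma^*$): since the equivalence $\sim$ only alters leading zeros, and zeros contribute nothing to the sum $x_1+\cdots+x_k$, the value $\omega(\vec v)$ does not depend on the chosen representative. Hence it is legitimate to compute $\omega$ on any convenient word representing $\vec u$ or $\vec v$, in particular on the one exhibited in the relevant rule of \eqref{rew2}.

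Next, since $\vec v$ is a child of $\vec u$, there is an $\vec R_{\mathcal H}$-single-step reduction $\vec u \to \vec v$, which by Proposition~\ref{ac} and the discussion preceding \eqref{rew2} is of type I), II) or III). For type II), writing $\vec u = \vec x\,0\,2\,\vec y$ and $\vec v = \vec x\,1\,0\,\vec y$, additivity of the digit-sum over concatenation gives $\omega(\vec u) = \omega(\vec x) + (0+2) + \omega(\vec y)$ and $\omega(\vec v) = \omega(\vec x) + (1+0) + \omega(\vec y)$, so $\omega(\vec v) = \omega(\vec u) - 1$. For type III), with $\vec u = \vec x\,1\,2\,\vec y$ and $\vec v = \vec x\,2\,0\,\vec y$, the same additivity yields $\omega(\vec u) = \omega(\vec x) + 3 + \omega(\vec y)$ and $\omega(\vec v) = \omega(\vec x) + 2 + \omega(\vec y)$, again a drop of exactly $1$. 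For type I), I would use the caveat recalled just after Proposition~\ref{p4}: the word $2\,x_1\cdots x_k$ is also represented by $0\,2\,x_1\cdots x_k$, and since $\omega$ ignores the leading zero this case is literally the $\vec x = $ empty instance of type II), giving the same conclusion.

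Collecting the three cases establishes $\omega(\vec v) = \omega(\vec u) - 1$ for every child $\vec v$ of every $\vec u$. There is no real obstacle here — the lemma is essentially a bookkeeping identity — and the only point demanding a word of care is the one already flagged: making sure the computation is carried out on representatives consistent with \eqref{rew2}, which is harmless precisely because $\omega$ is constant on $\sim$-classes.
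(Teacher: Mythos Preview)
Your proof is correct and follows exactly the approach the paper indicates: the paper's own proof consists of the single remark that ``a quick glimpse to $\vec{R}_{\mathcal H}$-single-step reductions \eqref{rew2} is all you need,'' and you have simply written out that glimpse in full. Your extra care about $\omega$ being well defined on $\sim$-classes is a reasonable addition but not strictly needed, since the representatives in \eqref{rew2} already have no spurious leading zeros except in the type~I) case you handled.
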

By Lemma~\ref{l1} and Corollary~\ref{conf} we immediately get the following Proposition.
\begin{prop} The restriction of the map $\omega$ to $\mathcal H (n)$ has a global minimum point and a global maximum point given by the sink $\vec{n}''$ and the root $\vec{n}'$ respectively.
\end{prop}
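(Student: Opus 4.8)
The plan is to derive the statement as a quick corollary of Lemma~\ref{l1}, feeding it the structural information about $A(n)$ collected just before, namely that $A(n)$ is pointed accessible from the root $\vec{n}'$ and has $\vec{n}''$ as global sink (both recorded via Corollary~\ref{conf}).

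First I would isolate the elementary consequence of Lemma~\ref{l1}: if $\vec{u}_0 \rightarrow_{\vec{R}_{\mathcal H}} \vec{u}_1 \rightarrow_{\vec{R}_{\mathcal H}} \cdots \rightarrow_{\vec{R}_{\mathcal H}} \vec{u}_{\ell}$ is any directed path of length $\ell$ in $A(n)$, then applying Lemma~\ref{l1} at each of the $\ell$ arcs and telescoping gives $\omega(\vec{u}_{\ell}) = \omega(\vec{u}_0) - \ell$. Thus $\omega$ drops by exactly one along each arc, hence strictly decreases along every non-trivial directed path. Now take an arbitrary $\vec{n} \in \mathcal H(n)$ with $\vec{n} \neq \vec{n}'$: by pointed accessibility (Corollary~\ref{conf}, Part ii) there is a directed path from $\vec{n}'$ to $\vec{n}$, necessarily of length $\ell \geq 1$, whence $\omega(\vec{n}) = \omega(\vec{n}') - \ell < \omega(\vec{n}')$. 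This shows that $\vec{n}'$ is the (unique) global maximum point of the restriction of $\omega$ to $\mathcal H(n)$.

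Dually, for the minimum, take any $\vec{n} \in \mathcal H(n)$: by Corollary~\ref{conf}, Part i), there is a directed path from $\vec{n}$ to the sink $\vec{n}''$, of some length $\ell' \geq 0$, so $\omega(\vec{n}'') = \omega(\vec{n}) - \ell' \leq \omega(\vec{n})$, with equality precisely when $\vec{n} = \vec{n}''$. Hence $\vec{n}''$ is the (unique) global minimum point. The argument is essentially free of obstacles; the only point deserving a line of care is the telescoping identity itself, i.e.\ that the weight drop between two nodes joined by a path equals the length of that path, proved by a trivial induction on $\ell$ from Lemma~\ref{l1} — an observation which incidentally also forces all directed paths between a given pair of nodes of $A(n)$ to have the same length.
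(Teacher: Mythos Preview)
Your argument is correct and is precisely the approach the paper takes: the paper's own proof consists of the single sentence ``By Lemma~\ref{l1} and Corollary~\ref{conf} we immediately get the following Proposition,'' and you have simply unpacked that sentence in detail.
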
 
 Properties of the graph $A(n)$ gathered so far suggest that the several nodes of $A(n)$ can be displayed like the vertices of a family tree. On the bottom line we put $\vec{n}'$, the common ancestor of all nodes. $\ell$ rows above we display all nodes $\vec{n}$  such that 
 $$ \omega (\vec{n})  =  \omega (\vec{n}') -\ell.$$ 
 All children of a fixed node $\vec{n}$ will be arrayed consecutively from the left to the right in decreasing shortlex ordering. The picture below visualizes the graphs $A(10)$ and $A(12)$. 
 For $n \not= 2^m-1$, we can use green, red and yellow to colour, respectively, the minimal hyperbinary expansion, the binary hyperbinary expansion, and the remaining {\em branching} nodes (if existing).

\bigskip
 
\begin{center}
\begin{tikzpicture}[
vertex_style/.style={circle,shading=ball,ball color=red,draw=red!80!white,
%drop shadow={opacity=0.1}
},
vertex2_style/.style={circle,shading=ball,ball color=green,draw=green!80!white,
%drop shadow={opacity=0.1}
},
vertex3_style/.style={circle,shading=ball,ball color=yellow,draw=yellow!80!white,drop shadow={opacity=0.0}},
edge_style/.style={ultra thick, black, drop shadow={opacity=0.1}},
edge_style/.style={ultra thick, black, drop shadow={opacity=0.1}},  
nonterminal/.style={
% The shape:
rectangle,
% The size:
minimum size=6mm,
% The border:
very thick,
draw=red!50!black!50, % 50% red and 50% black,
% and that mixed with 50% white
% The filling:
top color=white, % a shading that is white at the top...
bottom color=red!50!black!20, % and something else at the bottom
% Font
font=\itshape
} ]
 \node[vertex2_style] (A)  at (0,0)  [label=left:$122 \;$]{} ;
     \node[vertex3_style] (B) at (0,1)  [label=left:$202 \;$] {};
     \node[vertex3_style] (C) at (-1,2)  [label=left:$1002 \;$] {};
         \node[vertex3_style] (D) at (1,2)  [label=right:$210$] {};
          \node[vertex_style] (E)  at (0,3)  [label=left:$1010 \;$]{} ;
        \path (A) edge [->>, thick]  (B);
           \path [->, thick] (B) edge (C);     
            \path [->, thick] (B) edge (D);    
          \path [->, thick] (C) edge (E); 
        \path  (D) edge [->, thick]  (E);
            \node[nonterminal] (G) at (-3,0.5) {$A(10)$};
\end{tikzpicture}
\qquad \qquad
\begin{tikzpicture}[
vertex_style/.style={circle,shading=ball,ball color=red,draw=red!80!white,
%drop shadow={opacity=0.1}
},
vertex2_style/.style={circle,shading=ball,ball color=green,draw=green!80!white,
%drop shadow={opacity=0.1}
}, vertex3_style/.style={circle,shading=ball,ball color=yellow,draw=yellow!80!white,drop shadow={opacity=0.0}
},
nonterminal/.style={
% The shape:
rectangle,
% The size:
minimum size=6mm,
% The border:
very thick,
draw=red!50!black!50, % 50% red and 50% black,
% and that mixed with 50% white
% The filling:
top color=white, % a shading that is white at the top...
bottom color=red!50!black!20, % and something else at the bottom
% Font
font=\itshape
}]
 \node[vertex2_style] (A)  at (0,0)  [label=left:$212 \;$]{} ;
       \node[vertex3_style] (C) at (-1,1)  [label=left:$1012 \;$] {};
         \node[vertex3_style] (D) at (1,1)  [label=right:$220 \,$] {};
          \node[vertex3_style] (E)  at (0,2)  [label=left:$1020 \;$]{} ;
             \node[vertex_style] (F) at (0,3)  [label=left:$1100 \;$] {};
           \path [->, thick] (A) edge (C);     
            \path [->>, thick] (A) edge (D);    
          \path [->>, thick] (C) edge (E); 
        \path  (D) edge [->, thick]  (E);
          \path  (E) edge [->, thick]  (F);
          \node[nonterminal] (G) at (3,2.5) {$A(12)$};
\end{tikzpicture}
\end{center}
\bigskip
\begin{center}
\begin{tikzpicture}
[nonterminal/.style={
% The shape:
rectangle,
% The size:
minimum size=6mm,
% The border:
very thick,
draw=red!50!black!50, % 50% red and 50% black,
% and that mixed with 50% white
% The filling:
top color=white, % a shading that is white at the top...
bottom color=red!50!black!20, % and something else at the bottom
% Font
font=\itshape
}]
%\node[nonterminal] (G) at (-2,0.5) {$A(2^k-1)$};
%\shade[top color=red, bottom color=green, draw=green!80!red,] (0,0.5) circle (.18cm) ;
  %\node (H) at (1,0.2) {$\underbrace{11 \cdots 1}_{k \; \text{times}}$};
\end{tikzpicture}
\end{center}

Thus, all nodes of $A(n)$ are disposed along $\omega(\vec{n}') - \omega(\vec{n}'') +1$ rows.
Furthermore, the presence of a fixed node $\vec{n} \in \mathcal H (n)$ on a certain row measures how far $\vec{n}$ is from being binary or minimal hyperbinary. In fact,
after setting $$ i(\vec{n}) = \omega (\vec{n}) - \omega (\vec{n}''), \quad \text{and} \quad j(\vec{n}) = \omega (\vec{n}') - \omega (\vec{n}),$$
the expansion $\vec{n}$ is  $i (\vec{n})$ generations away from being binary, since, by Lemma~\ref{l1}, the number $i (\vec{n})$ counts the $\vec{R}_{\mathcal H(n)}$-single-step reductions needed to reduce $\vec{n}$  to the bynary expansion. Analogously,  $\vec{n}$ is $j (\vec{n})$ generations away from being minimal hyperbinary.

Such criteria to establish a kind of distance from {\em binarity} and from  {\em minimal-hyperbinarity} are finer than simply counting the number of $2$'s in $\vec{n}$, and more mathematically significant than the shortlex ordering:  the expansions $122$ and $202$ in $\mathcal H(10)$ share the same number of $2$'s, but 
$$3 = i(122) \not= i(202) = 2  \quad \text{and} \quad 0= j(122)\not=j(202)=1;$$
Moreover, in $\mathcal H(20)$ we have
$$ 1212 <_{SL} 2100 <_{SL} 10012 <_{SL} 10100,$$
yet the expansion $2100$ is closer to be binary than $10012$ since 
$$1= i(2100) < i(10012)=2.$$

 We now state two Propositions that, in some way, geometrically translate the statement of  Proposition~\ref{pr1}.  

\newpage

\begin{prop}\label{10} For each $m \in \N$, there is a oriented graph isomorphism between $A(m)$ and $A(2m+1)$.
\end{prop}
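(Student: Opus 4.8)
The plan is to exhibit an explicit bijection between the two vertex sets and show it respects arcs, colours and (up to a shift) weights. The natural candidate is the map $\phi$ that appends a $1$ on the right: if $\vec{m} = x_1 \cdots x_k \in \mathcal H(m)$, then
$\phi(\vec{m}) = x_1 \cdots x_k 1$.
First I would check that $\phi$ is a well-defined bijection from $\mathcal H(m)$ onto $\mathcal H(2m+1)$. That $\phi(\vec m) \in \mathcal H(2m+1)$ is the arithmetic identity $2\bigl(\sum x_i 2^{k-i}\bigr) + 1 = \sum x_i 2^{k+1-i} + 1$, together with the observation that the last digit of any hyperbinary expansion of an odd number must be $1$ (it cannot be $0$ or $2$, by parity), and the leading digit is unchanged so remains nonzero. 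Injectivity is obvious; surjectivity follows because any $\vec{w} \in \mathcal H(2m+1)$ ends in $1$, and deleting that $1$ produces an expansion of $m$. (This also re-proves $b(2m+1) = b(m)$, which is why the statement is advertised as a geometric version of Proposition~\ref{pr1}.)

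Next I would verify that $\phi$ carries arcs to arcs of the same colour, in both directions. The point is that every single-step reduction in $\mathcal E(m)$ is of the form
$\vec{x}\,0\,2\,\vec{y} \rightarrow \vec{x}\,1\,0\,\vec{y}$,
$\vec{x}\,1\,2\,\vec{y} \twoheadrightarrow \vec{x}\,2\,0\,\vec{y}$,
or $2\,\vec{y} \rightarrow 1\,0\,\vec{y}$, where the rewrite happens at a fixed position not involving the appended trailing $1$; appending $1$ to $\vec{y}$ on both sides of such a reduction yields exactly a single-step reduction of the same type between $\phi$ of the two words. Conversely, an arc out of $\phi(\vec m) = x_1 \cdots x_k 1$ in $A(2m+1)$ must rewrite a factor $02$, $12$ or a leading $2$; since the final digit is $1$ and is not $2$, the rewritten factor lies entirely within $x_1 \cdots x_k$, so it is the image under $\phi$ of an arc of $A(m)$. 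Hence $\phi$ is a colour-preserving graph isomorphism. I would also remark that $\omega(\phi(\vec m)) = \omega(\vec m) + 1$, so $\phi$ shifts all weights uniformly by $1$; in particular it sends $\vec m'$ to $(2m+1)'$ and $\vec m''$ to $(2m+1)''$, matching roots to roots and sinks to sinks, and it preserves the row structure of the family-tree drawing.

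I do not expect a serious obstacle here: the only thing requiring a moment's care is the boundary case of a leading $2$, i.e.\ an expansion represented in $\Sigma^*$ as $2\,x_2\cdots x_k$ versus $0\,2\,x_2\cdots x_k$, where the reduction is of type I) and shortens the word. Under $\phi$ this becomes $2\,x_2\cdots x_k 1 \rightarrow 1\,0\,x_2\cdots x_k 1$, still of type I), so the identification of arcs goes through unchanged once one works modulo the equivalence $\sim$ on $\Sigma^*$. Thus the whole argument reduces to the two arithmetic identities already used in Proposition~\ref{p4} plus a routine case analysis on the location of the rewritten factor.
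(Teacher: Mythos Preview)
Your proposal is correct and follows essentially the same route as the paper: both use the map appending a trailing $1$, argue it is a bijection onto $\mathcal H(2m+1)$ via parity, and observe that single-step reductions on $x_1\cdots x_k$ correspond bijectively (and with the same colour) to those on $x_1\cdots x_k\,1$ because the appended $1$ cannot participate in any rewrite. Your write-up is more explicit than the paper's---you spell out surjectivity, the converse direction on arcs, and the leading-$2$ boundary case---whereas the paper dispatches the ``no extra arcs'' issue in one clause (``the word $x_k\,1$ has no descendants''), but the argument is the same.
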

\begin{proof}
The map
$$ \psi :  \vec{m} = x_1 \cdots x_k \in \mathcal H (m) \longmapsto x_1 \cdots x_k \, 1 \in \mathcal H (2m+1)$$
is a bijection, and preserves relations: if $\vec{y}$ is a child of $\vec{x}$, then $\psi (\vec{y})$ is a child of $\psi (\vec{x})$ through a single-step relation of the same type. The proof ends once you note that the word $x_k \, 1$ has no descendants; hence the expansions $ \vec{m} $ and $\psi (\vec{m})$ have the same number of children.
\end{proof}
 The following Proposition involves the notion of a {\em graph monomorphism} consisting, from our point of view, of an injective vertex map respecting connectivity between vertices and colors of the arcs.
 \begin{prop}\label{11}  The maps  $$ \phi' : x_1 \cdots x_k \in \mathcal H (m) \longmapsto x_1 \cdots x_k \, 0 \in \mathcal H (2m) $$
 and 
  $$ \phi'' : x_1 \cdots x_k \in \mathcal H (m-1) \longmapsto x_1 \cdots x_k \, 2 \in \mathcal H (2m)$$
  induce two graph monomorphisms from $A(m)$ and from $A(m-1)$ respectively to $A(2m)$. Moreover
 \begin{equation}\label{eq10} \phi' (H (m)) \, \cup \, \phi'' (\mathcal H (m-1))  = \mathcal H(2m).
 \end{equation}
\end{prop}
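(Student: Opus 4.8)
The plan is to verify three things in turn: that $\phi'$ and $\phi''$ are well-defined injections on the relevant expansion sets, that each respects the arc structure and arc-colours of the graphs, and finally that the images together exhaust $\mathcal H(2m)$. For well-definedness, note that if $x_1\cdots x_k \in \mathcal H(m)$ then appending a $0$ gives $\sum x_i 2^{k+1-i} = 2m$, so $x_1\cdots x_k\,0 \in \mathcal H(2m)$; likewise appending a $2$ to an expansion of $m-1$ yields $2(m-1)+2 = 2m$. Injectivity is immediate since the original word is recovered by deleting the last letter. One should also check the leading-digit condition ($x_1 \neq 0$) is preserved, which is trivial since the first letter is untouched.

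Next I would check the arc-colour-preserving property. A single-step reduction acts on an internal factor $02$, $12$ (or the leading $2$) of a word; appending a fixed final letter $0$ or $2$ does not interfere with any such factor occurring strictly before the last position, and conversely every reduction of $\phi'(\vec m)$ or $\phi''(\vec m)$ that touches only the first $k$ letters comes from a reduction of $\vec m$. The one point requiring a word of care is whether the appended letter can \emph{create} a new reducible factor together with $x_k$: appending $0$ after $x_k$ could in principle form the factor ``$x_k\,0$'', but the rewrite rules of $\vec R$ only fire on factors of the form $02$ or $12$, i.e.\ they need a $2$ to the \emph{right}; since the appended $0$ is the last letter, no new reduction is introduced, and similarly the appended $2$ sits at the far right where the only rules consuming a $2$ on its left ($0\,2$, $1\,2$) would require the letter $x_k$ to be $0$ or $1$ — but then that \emph{is} a genuine new arc, corresponding exactly to the fact that $\vec m \in \mathcal H(m-1)$ may acquire an extra child under $\phi''$. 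This is consistent with ``monomorphism'' (injective, connectivity- and colour-respecting) rather than ``induced subgraph'', so no contradiction arises; I would simply remark that $\phi'$ and $\phi''$ need not be \emph{full}, exactly as in the proof of Proposition~\ref{10} where the analogous remark about new children was made explicit.

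For the surjectivity statement \eqref{eq10}, take any $\vec v = y_1\cdots y_\ell \in \mathcal H(2m)$. Since $2m$ is even and each $y_i \in \{0,1,2\}$, reducing $\sum y_i 2^{\ell-i} \equiv 0 \pmod 2$ forces $y_\ell \in \{0,2\}$. If $y_\ell = 0$ then $y_1\cdots y_{\ell-1} \in \mathcal H(m)$ and $\vec v = \phi'(y_1\cdots y_{\ell-1})$; if $y_\ell = 2$ then $y_1\cdots y_{\ell-1} \in \mathcal H(m-1)$ and $\vec v = \phi''(y_1\cdots y_{\ell-1})$. The only degenerate case is $\ell = 1$: then $\vec v$ is the single letter $2$ (the expansion of $m=1$, $2m=2$), which equals $\phi''$ of the empty expansion of $m-1=0$; here one uses the convention, already in force in the excerpt, that $2$ and $02$ represent the same element of $\mathcal H$, so $\vec v = \phi'$ of the expansion of $1$ as well. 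Thus every $\vec v$ lies in the image of $\phi'$ or of $\phi''$, which is \eqref{eq10}.

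I expect the main obstacle to be purely expository rather than mathematical: making precise, without belabouring it, that appending a letter on the right cannot destroy or spuriously create reductions among the first $k$ letters, and carefully handling the boundary between $\mathcal H$ (equivalence classes modulo leading zeros) and $\Sigma^*$ so that the ``extra child under $\phi''$'' phenomenon is seen to be a feature of the map being a non-full monomorphism, not an error. Everything else is a one-line arithmetic check or an immediate consequence of the rewrite rules \eqref{rew2}.
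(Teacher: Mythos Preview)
Your argument is correct and follows exactly the line the paper takes; the paper's own proof is two sentences (``straightforward'' plus the parity remark that no expansion of $2m$ ends in $1$), and you have simply spelled out the details. The only slip is in your degenerate case $m=1$: the word $2\in\mathcal H(2)$ is \emph{not} in the image of $\phi'$ (which appends a $0$), only of $\phi''$ applied to the empty word, so drop the clause ``$\vec v=\phi'$ of the expansion of $1$ as well''; the case is already handled by your $\phi''$ branch.
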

\begin{proof} The proof is straightforward. Equation \eqref{eq10} comes from the fact that, being the integer $2m$ even, no hyperbinary expansions of $2m$ end with $1$.
\end{proof}
Proposition~\ref{11} says that each graph $A(2m)$ is essentially made by two pieces:  we find a subgraph isomorphic to $A(m-1)$  standing somewhat on the left; its nodes are given by the set $\phi'' (\mathcal H (m-1))$. The remaining nodes in $A(2m)$ and the arcs connecting them form a subgraph isomorphic to  $A(m)$. The situation is visualized in the picture below, where the red-shaded subgraph of $A(18)$ is  isomorphic to $A(8)$, and the blue-shaded one is isomorphic to $A(9)$. 

\medskip

\begin{center}
\begin{tikzpicture}[
vertex_style/.style={circle,shading=ball,ball color=red,draw=red!80!white,
%drop shadow={opacity=0.1}
},
vertex2_style/.style={circle,shading=ball,ball color=green,draw=green!80!white,
%drop shadow={opacity=0.1}
},
vertex3_style/.style={circle,shading=ball,ball color=yellow,draw=yellow!80!white,drop shadow={opacity=0.0}},
edge_style/.style={ultra thick, black, drop shadow={opacity=0.1}},
edge_style/.style={ultra thick, black, drop shadow={opacity=0.1}},  
nonterminal/.style={
% The shape:
rectangle,
% The size:
minimum size=6mm,
% The border:
very thick,
draw=red!50!black!50, % 50% red and 50% black,
% and that mixed with 50% white
% The filling:
top color=white, % a shading that is white at the top...
bottom color=red!50!black!20, % and something else at the bottom
% Font
font=\itshape
} ]
   \draw [pattern= north west lines, pattern color=red] (-0.4,-0.4)--(-0.4,3.4) to (0.4,3.4) -- (0.4,-0.4) --cycle;
              \draw [pattern= north west lines, pattern color=blue] (1.1,1.6)--(1.1,4.4) to (1.9,4.4) -- (1.9,1.6) --cycle;
 \node[vertex2_style] (A)  at (0,0)  [label=left:$1122 \;$]{} ;
     \node[vertex3_style] (B) at (0,1)  [label=left:$1202 \;$] {};
     \node[vertex3_style] (C) at (0,2)  [label=left:$2002 \;$] {};
         \node[vertex3_style] (D) at (1.5,2)  [label=right:$\; 1210$] {};
          \node[vertex3_style] (E)  at (0,3)  [label=left:$10002 \;$]{} ;
               \node[vertex3_style] (G)  at (1.5,3)  [label=right:$\; 2010 \;$]{} ;
          \node[vertex_style] (F)  at (1.5,4)  [label=right:$\; 10010 \;$]{} ;
        \path (A) edge [->>, thick]  (B);
           \path [->>, thick] (B) edge (C);     
            \path [->, thick] (B) edge (D);    
          \path [->, thick] (C) edge (E); 
             \path [->, thick] (E) edge (F); 
              \path [->, thick] (C) edge (G); 
        \path  (D) edge [->>, thick]  (G);
                  \path [->, thick] (G) edge (F); 
            \node[nonterminal] (H) at (-3,0.5) {$A(18)$};
          
\end{tikzpicture}
\end{center}

\medskip
By looking at the picture above, the reader has probably noted that all arcs in $\mathcal E(18)$ from the red-shaded subgraph to the blue-shaded one are of the same type. This is actually a general feature of all graphs $A(2m)$ with $ m \in \N$, in a sense made precise by Proposition~\ref{cr}.  Its statement concerns {\em bridging arcs}, i.e.\ edges in $A(2m)$ from a node in   $\phi' (\mathcal H (m-1))$ to a node
in $\phi'' (\mathcal H (m))$.

\begin{prop}\label{cr} Fixed any integer $m \geq 2$, all bridging arcs in $\mathcal E(2m)$ are equally colored. They are of type `$\rightarrow$' if $m$ is odd, and of type `$\twoheadrightarrow$' otherwise.
\end{prop}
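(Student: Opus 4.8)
The plan is to reduce the statement to a purely local inspection of the three single-step reductions I), II), III) listed in \eqref{rew2}. First I would recall, from Proposition~\ref{11} and the fact that no hyperbinary expansion of the even integer $2m$ can end in $1$, that $\mathcal H(2m)$ splits as the disjoint union $\phi''(\mathcal H(m-1))\sqcup\phi'(\mathcal H(m))$ of the expansions of $2m$ ending in $2$ and those ending in $0$; accordingly, a bridging arc is precisely an arc $\vec{u}\to\vec{v}$ of $A(2m)$ whose source $\vec{u}$ ends in $2$ and whose target $\vec{v}$ ends in $0$.

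The crux is the observation that each reduction in \eqref{rew2} rewrites a short block of a word — the block $02$ or $12$, or the initial letter $2$ in case~I) — while leaving the suffix $\vec{y}$ sitting after that block untouched. Hence such a reduction can change the last letter of a word only when this suffix $\vec{y}$ is empty. For type~I) this forces the source word to be just $2$, i.e.\ $2m=2$, impossible since $m\geq2$; so no bridging arc is of type~I). For type~II) with $\vec{y}$ empty one gets the ``terminal'' arc $\vec{x}\,0\,2\rightarrow\vec{x}\,1\,0$, and for type~III) with $\vec{y}$ empty the terminal arc $\vec{x}\,1\,2\twoheadrightarrow\vec{x}\,2\,0$; conversely, any such terminal arc whose source lies in $\mathcal H(2m)$ is a bridging arc. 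Thus every bridging arc of $A(2m)$ is a terminal arc of one of these two kinds, coloured `$\rightarrow$' and `$\twoheadrightarrow$' respectively.

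It then remains to separate the two kinds by the parity of $m$. Writing $v$ for the integer represented by $\vec{x}$, and noting that appending the block $02$ (resp.\ $12$) multiplies the represented integer by $4$ and adds $2$ (resp.\ $4$), we see that $\vec{x}\,0\,2\in\mathcal H(2m)$ amounts to $4v+2=2m$, so $v=(m-1)/2$ and $m$ is odd (and then $\vec{x}\,1$ represents $2v+1=m$, so the child $\vec{x}\,1\,0$ does lie in $\phi'(\mathcal H(m))$); whereas $\vec{x}\,1\,2\in\mathcal H(2m)$ amounts to $4v+4=2m$, so $v=(m-2)/2$ and $m$ is even. Hence, when $m$ is odd the bridging arcs are exactly the type-II) terminal arcs, all coloured `$\rightarrow$', and when $m$ is even they are exactly the type-III) terminal arcs, all coloured `$\twoheadrightarrow$'. (That at least one bridging arc exists follows from the connectivity of the flowchart $A(2m)$ together with the fact, implicit in the above, that no arc runs from an expansion ending in $0$ to one ending in $2$.)

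I expect the most delicate point to be not any single computation but the bookkeeping at the interface between reductions I) and II): the excerpt deliberately files $0\,2\,x_1\cdots x_k\rightarrow1\,0\,x_1\cdots x_k$ under type~I), so when invoking a type-II) step $\vec{x}\,0\,2\vec{y}\rightarrow\vec{x}\,1\,0\vec{y}$ one must keep $\vec{x}$ nonempty. Fortunately the parity computation already forces $v=(m-1)/2\geq1$ for every $m\geq3$, so $\vec{x}$ is automatically nonempty and no genuine ambiguity arises; beyond that, the argument is mechanical once \eqref{rew2} is at hand.
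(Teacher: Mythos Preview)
Your argument is correct and follows essentially the same route as the paper: both proofs observe that a bridging arc must be a reduction acting on the final two letters of the source word, so its colour is determined by the penultimate letter, which in turn is fixed by the parity of $m$. The only cosmetic difference is that the paper reads this parity off directly (the penultimate letter of $\vec{u}\in\phi''(\mathcal H(m-1))$ is the last letter of an expansion of $m-1$), whereas you recover it via the arithmetic identities $4v+2=2m$ and $4v+4=2m$.
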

\begin{proof}  A node $  x_1 \cdots x_{k-1} \, 2 \in \phi'' (\mathcal H (m-1))$ has a child in $ \phi'' (\mathcal H (m))$ if and only if $x_{k-1} \not=2$. In this case, the color of the bridging arc connecting them is `$\rightarrow$' if and only if $x_{k-1}=0$. This happens whenever $m-1$ is even.
\end{proof}
As a final remark on the general properties of the sequence of graphs $\{ \,  A(n) \, \vert \, n >0 \, \}$, we point out that its elements are not necessarily planar, the minimal integer such that $A(n)$ is non-planar being $n=36$.

\section{Long vs. Short Expansions}\label{s4}

A hyperbinary expansion $x_ 1 \cdots x_k$ of $n$  is said to be {\em short} or {\em long}  whether $k = \lfloor \log_2 n \rfloor$ or $\lfloor \log_2 n \rfloor + 1$. In \cite[Section 3]{AR}, A. De Luca and C. Reutenauer explain why these are the only possible lengths. 

\begin{lemma} \label{l1} The hyperbinary expansions of $n$ of type $2 \, x_2\cdots x_h$ and $1 \, 2 \, x_3 \cdots x_{h}$ are necessarily short. In other words, $h$ is necessarily equal to $\lfloor \log_2 n \rfloor$.
\end{lemma}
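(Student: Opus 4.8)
The plan is to bound $n$ from below using only the leading digit(s) of the given expansion, and to combine this with the fact recalled just above from \cite[Section~3]{AR} that every hyperbinary expansion of $n$ has length either $\lfloor\log_2 n\rfloor$ or $\lfloor\log_2 n\rfloor+1$. In particular every such expansion has length at least $\lfloor\log_2 n\rfloor$, so to prove that an expansion of the indicated shape and length $h$ is short it is enough to show $h\le\lfloor\log_2 n\rfloor$; the length dichotomy then forces $h=\lfloor\log_2 n\rfloor$, which is precisely the statement.

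First I would use the defining identity $n=\sum_{i=1}^{h}x_i 2^{h-i}$. If the expansion reads $2\,x_2\cdots x_h$, then dropping the nonnegative tail $\sum_{i\ge 2}x_i 2^{h-i}$ leaves $n\ge 2\cdot 2^{h-1}=2^h$. If instead it reads $1\,2\,x_3\cdots x_h$, the same truncation gives $n\ge 1\cdot 2^{h-1}+2\cdot 2^{h-2}=2^h$. Either way $n\ge 2^h$, hence $\log_2 n\ge h$ and so $\lfloor\log_2 n\rfloor\ge h$.

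Combining $\lfloor\log_2 n\rfloor\ge h$ with the lower bound $h\ge\lfloor\log_2 n\rfloor$ coming from \cite{AR} yields $h=\lfloor\log_2 n\rfloor$, i.e.\ the expansion is short. The degenerate small cases $h=1$ for the first shape and $h=2$ for the second are covered by the very same computation and require no separate treatment.

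I do not expect a genuine obstacle here: the argument is a single elementary estimate. The only point worth care is invoking the length dichotomy of \cite{AR} in the correct direction — it supplies the lower bound $h\ge\lfloor\log_2 n\rfloor$, while the leading digit(s) supply the matching upper bound $h\le\lfloor\log_2 n\rfloor$.
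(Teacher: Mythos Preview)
Your argument is correct. It is, however, genuinely different in spirit from the paper's own proof. The paper does not bound $n$ arithmetically; instead it uses the rewriting system $\vec{R}_{\mathcal H}$ developed in Section~\ref{s2}: from $2\,x_2\cdots x_h$ it produces the child $1\,0\,x_2\cdots x_h$ of length $h+1$, and from $1\,2\,x_3\cdots x_h$ it produces (via two reductions) the descendant $1\,0\,0\,x_3\cdots x_h$ of length $h+1$. Since both lengths $h$ and $h+1$ occur in $\mathcal H(n)$ and only two lengths are possible, $h$ must be the smaller one. Your approach is more self-contained --- it needs only the defining sum and the length dichotomy, not the graph $A(n)$ --- while the paper's approach keeps the argument within the rewriting framework that the rest of Section~\ref{s4} builds on. Both are equally short; yours is slightly more elementary, theirs more thematic.
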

\begin{proof}
In the graph $A(n)$, the node $2 \, x_2\cdots x_h$  has $1\, 0 \,  x_2\cdots x_h$ among its children. Since the latter has a longer length, the former is short. Analogously, the presence of the node $1 \, 2 \, x_3 \cdots x_h$ implies the existence in $A(n)$ of the subgraph
\begin{equation}\label{aa} 1 \, 2 \, x_3 \cdots x_h \;  \twoheadrightarrow \; 2 \, 0 \, x_3 \cdots x_h \; \rightarrow \; 1 \, 0 \, 0 \, x_3 \cdots x_h
\end{equation}
isomorphic to $A(4)$. The third expansion in \eqref{aa}  is longer of its (necessarily short) ancestors.
\end{proof}

\begin{prop}\label{p1} Let $n$ be any positive integer not equal to a power of $2$. The long hyperbinary expansions, together with those arcs in $\mathcal E(n)$ connecting them, form a subgraph $L (n)$ of $A(n)$ isomorphic to $A(n-2^k)$, where 
$ k= \lfloor \log_2 n \rfloor $.
\end{prop}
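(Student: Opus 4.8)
The plan is to build an explicit bijection between the long hyperbinary expansions of $n$ and the hyperbinary expansions of $n-2^k$, where $k=\lfloor\log_2 n\rfloor$, and then check that this bijection is in fact a colour-preserving graph isomorphism $L(n)\cong A(n-2^k)$. First I would set $k=\lfloor\log_2 n\rfloor$ and observe that a long expansion has exactly $k+1$ digits, so it has the form $x_1 x_2\cdots x_{k+1}$ with $x_1\neq 0$. I would argue that, since $n<2^{k+1}$, the leading digit $x_1$ must be $1$ (if $x_1=2$ the word already contributes $2^{k+1}>n$). Moreover, since the word is long rather than short, the shorter candidate "$1\,\vec y$" of length $k$ is not available, and in fact the second digit is forced: by Lemma~\ref{l1} (the one about words of type $2\,\vec x$ and $1\,2\,\vec x$ being short), a long expansion cannot begin with $1\,2$, and it obviously cannot begin with $1\,0\cdots$? — here I would need to be careful: the right statement is that a long expansion is exactly one of the form $1\,0\,z_3\cdots z_{k+1}$ OR, more cleanly, one whose value of the tail after stripping the leading $1$ equals $n-2^k$ on $k$ digits. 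The cleanest route is: a long expansion of $n$ is precisely a string $1\,w$ where $w=z_2\cdots z_{k+1}$ is a $k$-digit word (leading zero allowed) representing $n-2^k$, i.e.\ $w$ lies in the $\sim$-class of an element of $\mathcal H(n-2^k)$ padded on the left with zeros up to length $k$. I would prove this is a well-defined bijection $\Lambda:\,1\,w\mapsto w$ onto $\mathcal H(n-2^k)$, using Proposition~\ref{p4} to know all long expansions represent $n$ and a short counting/length argument (from \cite[Section 3]{AR}, already cited) to know no expansion of $n-2^k$ needs more than $k$ digits.

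Next I would verify that $\Lambda$ transports arcs to arcs with matching colour. A single-step reduction acts on a length-$2$ window, so an arc between two long expansions $1\,w\to 1\,w'$ either fires entirely inside $w$ (the window does not touch position $1$), in which case $w\to w'$ is literally the same rewrite rule with the same colour, or the window covers positions $1$ and $2$. In the latter case the window is $1\,x_2$ with $x_2\in\{0,2\}$; the rules are $0\,2\to 1\,0$, $1\,2\twoheadrightarrow 2\,0$, and the only one with left pattern starting "$1$" is $1\,2\twoheadrightarrow 2\,0$, which would turn $1\,w$ into $2\,0\cdots$, contradicting leadingdigit~$=1$ for a long expansion; hence no arc between two long expansions uses the boundary window, and every such arc corresponds bijectively to an arc of $A(n-2^k)$ with the same colour. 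I would also check the reverse: any arc $w\to w'$ in $A(n-2^k)$ lifts to $1\,w\to 1\,w'$, which stays long because prepending $1$ to a $k$-digit word gives a $(k+1)$-digit word with nonzero leading digit — so $\Lambda$ is a graph isomorphism, not merely an injective monomorphism.

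The main obstacle I expect is the bookkeeping around the $\sim$-equivalence and leading zeros: an element of $\mathcal H(n-2^k)$ may have fewer than $k$ digits in its canonical (no-leading-zero) form, so "$1\,w$" needs $w$ to be the zero-padded representative, and I must make sure that (a) padding is unambiguous given that the target length $k$ is fixed by $n$, (b) the bijection does not accidentally identify two distinct long expansions of $n$, and (c) the excluded case $n=2^k$ is genuinely the only degenerate one (there $n-2^k=0$, $\mathcal H(0)=\varnothing$, consistent with $A(n)=A(2^k)$ having its long-expansion part empty — indeed $2^k$ has the long expansion $1\,0^k$? no: $2^k$ written with $k+1$ digits would be $1\underbrace{0\cdots0}_{k}$ representing $2^k$, which IS long, so I must double-check the hypothesis "$n$ not a power of $2$" and perhaps restate that for $n=2^k$ the set of long expansions is the single word $1\,0^k$, matching $\mathcal H(0)$ being empty only after adopting the convention $b(0)=0$; this edge case deserves an explicit sentence). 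Once these conventions are pinned down, the verification of the rewrite rules is the routine three-case check indicated above and should be written out compactly.
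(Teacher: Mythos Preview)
Your plan is correct and follows essentially the same route as the paper: strip the leading $1$ from a long expansion to get a bijection onto $\mathcal H(n-2^k)$ (the paper writes this as $\xi:1\,x_2\cdots x_{k+1}\mapsto x_{\bar h}\cdots x_{k+1}$, which is your $\Lambda$ composed with removal of leading zeros), and then observe that rewrite windows never touch the prepended $1$ since Lemma~\ref{l1} forbids $x_2=2$. Your arc-preservation check is more explicit than the paper's ``by construction,'' and your worries about zero-padding and the excluded case $n=2^k$ are the right things to nail down but resolve exactly as you indicate.
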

\begin{proof} 
By Lemma \ref{l1}, a typical long hyperbinary expansion of $n$ is either of type $1 \, 0 \, x_3 \cdots x_{k+1}$ or  $1 \, 1 \, x_3 \cdots x_{k+1}$, where $k= \lfloor \log_2 n \rfloor$.
The one-to-one map between the nodes of $L (n)$ and those in $A(n-2^k)$ is given by
$$\xi : 1 \, x_2 \, x_3 \cdots x_{k+1}  \longmapsto x_{\bar{h}} \cdots x_{k+1},$$ where
$ \bar{h} = \min \; \{ \; h \geq 2 \; \vert \; x_h \not=0 \; \}.$
Such $\bar{h}$ exists since, in our hypotheses, $n$ is not a power of $2$.

By construction, whenever the nodes $\vec{n}_1$ and $\vec{n}_2$ in $L(n)$ are connected by an arc, $\xi (\vec{n}_1)$ and $\xi (\vec{n}_2)$ in $A(n-2^k)$ are connected by an arc of the same color. 
 \end{proof}
As an example, consider the case $n=20$.
Since $ 2^4 < 20 <2^5$, according to Proposition \ref{p1} the subgraph of  $A(20)$ of long expansions is isomorphic to $A(4)$, as visualized below.
\bigskip
\begin{center}
\begin{tikzpicture}[
vertex_style/.style={circle,shading=ball,ball color=red,draw=red!80!white,
},
vertex2_style/.style={circle,shading=ball,ball color=green,draw=green!80!white,
}, vertex3_style/.style={circle,shading=ball,ball color=yellow,draw=yellow!80!white,drop shadow={opacity=0.0}
},
nonterminal/.style={
rectangle,
minimum size=6mm,
very thick,
draw=red!50!black!50, top color=white, 
bottom color=red!50!black!20, font=\itshape
}]
   \draw [pattern= vertical lines, pattern color=red] (-2,1.5)--(-2.5,2) to (0,4.5) -- (.5,4) --cycle;
 \node[vertex2_style] (A)  at (0,0)  [label=left:$1212 \;$]{} ;
       \node[vertex3_style] (C) at (-1,1)  [label=left:$2012 \;$] {};
         \node[vertex3_style] (D) at (1,1)  [label=right:$1220 \,$] {};
          \node[vertex3_style] (E)  at (0,2)  [label=right:$2020 \;$]{} ;
          \node[vertex3_style] (G)  at (-2,2)  [label=left:$10012 \;\;\;$]{} ;
           \node[vertex3_style] (H) at (-1,3)  [label=left:$10020 \;\;\;$] {};
              \node[vertex3_style] (I) at (1,3)  [label=right:$2100 \;$] {};
             \node[vertex_style] (F) at (0,4)  [label=left:$10100 \;\;\;$] {};
           \path [->>, thick] (A) edge (C);     
            \path [->>, thick] (A) edge (D);    
          \path [->>, thick] (C) edge (E); 
        \path  (D) edge [->>, thick]  (E);
              \path  (G) edge [->>, thick]  (H);
          \path  (H) edge [->, thick]  (F);
          \path  (C) edge [->, thick]  (G);
            \path  (E) edge [->, thick]  (H);
             \path  (E) edge [->, thick]  (I);
              \path  (I) edge [->, thick]  (F);
          \node[nonterminal] (Z) at (-4,.5) {$A(20)$};
                 %    \draw [pattern= horizontal lines light gray, pattern color=red] (4.6,.5)--(4.6,3.5) to (5.4,3.5) -- (5.4,.5) --cycle;
             \node[vertex2_style] (S)  at (5,1)  [label=right:$12 \;$]{} ;
       \node[vertex3_style] (T) at (5,2)  [label=right:$20 \;$] {};
             \node[vertex_style] (U) at (5,3)  [label=right:$100 \;$] {};
           \path [->>, thick] (S) edge (T);     
            \path [->, thick] (T) edge (U);   
           
          \node[nonterminal] (V) at (7,1.5) {$A(4)$};
\end{tikzpicture}

\end{center}

\begin{cor}\label{c1} Let $n$ be any positive integer, and $ k= \lfloor \log_2 n \rfloor $.
The number $ \ell (n) $ of long hyperbinary expansions of $n$ is given by  
$ b(n-2^k)$.
\end{cor}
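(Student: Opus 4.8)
The plan is to reduce the statement to Proposition~\ref{p1}, handling separately the case in which $n$ is a power of $2$, which that proposition explicitly excludes.

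\emph{Case $n \neq 2^k$.} By Proposition~\ref{p1} the long hyperbinary expansions of $n$ are precisely the vertices of the subgraph $L(n)$, and $L(n)$ is isomorphic to $A(n - 2^k)$. A graph isomorphism restricts to a bijection between vertex sets, so $\ell(n) = \lvert \mathcal H(n-2^k)\rvert$. Moreover $2^k \leq n < 2^{k+1}$ together with $n \neq 2^k$ gives $1 \leq n - 2^k < 2^k$, hence $n - 2^k$ is a positive integer and $\lvert \mathcal H(n-2^k)\rvert = b(n-2^k)$ by the definition of $b$.

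\emph{Case $n = 2^k$.} Then $\lfloor \log_2 n\rfloor = k$, so a long expansion of $n$ has length $k+1$; I would check directly that there is exactly one. Indeed, if $x_1\cdots x_{k+1}\in\mathcal H(2^k)$ then $x_1 \geq 1$ forces $x_1\cdot 2^k \geq 2^k$, which leaves nothing for the remaining digits, so $x_1 = 1$ and $x_2 = \cdots = x_{k+1} = 0$; that is, the unique long expansion is the binary one $1\,\underbrace{0\cdots 0}_{k}$ and $\ell(2^k) = 1$. Since $n - 2^k = 0$ and $b(0) = 1$ (Proposition~\ref{pr1}), the identity $\ell(n) = b(n-2^k)$ holds here as well.

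I do not anticipate a genuine obstacle: Proposition~\ref{p1} already carries the weight of the argument. The only delicate point is the boundary case $n = 2^k$, where one must verify by hand the uniqueness of the long expansion and reconcile it with the conventional value $b(0) = 1$ coming from the recursion in Proposition~\ref{pr1}.
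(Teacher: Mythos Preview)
Your proof is correct and follows essentially the same approach as the paper: both reduce to Proposition~\ref{p1} for the generic case and treat $n=2^k$ separately, invoking $b(0)=1$ from Proposition~\ref{pr1}. The only cosmetic difference is that for $n=2^k$ the paper enumerates all of $\mathcal H(2^k)$ and observes that only the binary expansion is long, whereas your inequality argument ($x_1\cdot 2^k \ge 2^k$ forces $x_1=1$, $x_2=\cdots=x_{k+1}=0$) reaches the same conclusion more directly.
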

\begin{proof}
The only case left aside by Proposition \ref{p1} is when $n=2^k$ for some $k \in \N_0$. 
We have to prove that each $n=2^k$ has just $b(0)=1$ long expansion. 
This is trivially true for $k \leq 1$. 

When $k >1$, we note that the nodes of the tree $A(2^k)$ are
$$ 1\underbrace{0 \dots 0}_{\text{$k$ times}}\; ; \qquad 2\underbrace{0 \dots 0}_{\text{$k \text{-} 1$ times}} \, ; \qquad \underbrace{1 \dots 1}_{\text{$h$ times}} \; 2 \! \underbrace{0 \dots 0}_{\text{$k \text{-} h \text{-}1$ times}}  \quad \text{for $0<  h < k$.}$$
Among such expansions only the first one is long.
\end{proof}

When $n=2^k-1$, Corollary \ref{c1} says that the number of long expansions is equal to
$$ b(2^k-1-2^{k-1}) =  b(2^{k-1}-1) = 1. $$
In fact the unique hyperbinary expansion of  $2^k-1$ is long.

\section{Trees}\label{s5}
The fundamental group $\pi_1(G)$ of any connected graph $G$ is a free group.  The number $v(G)$ of its generators is called {\em cyclomatic number} or  {\em circuit rank} of $G$. 
\begin{prop}\label{15} Let $n$ be a positive integer not equal to $2^m-1$. The following formula holds.
$$v(A(n))= \sum_{\vec{n} \not=\vec{n}''} ( o(\vec{n})-1 ),$$
where $o(\vec{n})$ is the outdegree of $\vec{n}$, i.e.\ the number of its children.
\end{prop}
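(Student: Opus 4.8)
The plan is to reduce the statement to the classical identity for the circuit rank of a connected graph and then evaluate the relevant cardinalities in terms of the combinatorics of $A(n)$. Recall that for a connected graph $G$ the rank $v(G)$ of $\pi_1(G)$ — equivalently, after contracting a spanning tree, the number of edges lying outside a spanning tree — equals $|E(G)| - |V(G)| + 1$. So it suffices to count the vertices and the arcs of $A(n)$.

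First I would record that $A(n)$ is connected as an undirected graph: this is immediate from the flowchart property listed after Corollary~\ref{conf}, since every node is joined to the root $\vec{n}'$ by a path and hence any two nodes lie in the same component. (The hypothesis $n \neq 2^m-1$ only serves to guarantee that $A(n)$ has at least one arc; the degenerate case $|\mathcal H(n)|=1$ would still be consistent with the claimed formula, both sides vanishing.) Next, the vertex set of $A(n)$ is $\mathcal H(n)$, so $|V(A(n))| = b(n)$. For the arcs, since $A(n)$ is simple the number of arcs issuing from a node $\vec{n}$ is exactly its outdegree $o(\vec{n})$, whence $|E(A(n))| = \sum_{\vec{n} \in \mathcal H(n)} o(\vec{n})$; and by Proposition~\ref{ac} the unique childless node is the binary expansion $\vec{n}''$, i.e.\ $o(\vec{n}'')=0$, so in fact $|E(A(n))| = \sum_{\vec{n} \neq \vec{n}''} o(\vec{n})$.

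Finally I would substitute. Writing $N = b(n) = |\mathcal H(n)|$, the set of nodes $\vec{n} \neq \vec{n}''$ has exactly $N-1$ elements, so
$$ v(A(n)) = |E(A(n))| - |V(A(n))| + 1 = \sum_{\vec{n} \neq \vec{n}''} o(\vec{n}) - N + 1 = \sum_{\vec{n} \neq \vec{n}''} o(\vec{n}) - (N-1) = \sum_{\vec{n} \neq \vec{n}''} \big( o(\vec{n}) - 1 \big), $$
which is the asserted identity. There is no genuine obstacle beyond this bookkeeping: the only substantive inputs are the standard circuit-rank formula, the connectedness of $A(n)$ (already established), and the identification of $\vec{n}''$ as the unique node of outdegree $0$ from Proposition~\ref{ac}.
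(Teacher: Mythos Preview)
Your proof is correct and follows essentially the same approach as the paper's own argument: both invoke the standard circuit-rank identity $v(A(n)) = |\mathcal E(n)| - (|\mathcal H(n)| - 1)$, count the arcs as $\sum_{\vec{n}\neq\vec{n}''} o(\vec{n})$ using that $\vec{n}''$ is the unique childless node, and then regroup. Your version is slightly more explicit about connectedness and the role of the hypothesis $n\neq 2^m-1$, but there is no substantive difference.
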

\begin{proof}
It is well known (see, for instance, \cite[p. 67]{O}) that
$$ v(A(n)) = \lvert \mathcal E (n) \rvert  - ( \lvert \mathcal H (n) \rvert -1).$$
The cardinality of  $\mathcal E (n)$ is given by 
$ \sum_{\vec{n} \not=\vec{n}''}  o(\vec{n})$, since $\vec{n}''$ has no children.
The result follows now easily.
\end{proof}
\begin{lemma}\label{16} The outdegree of a node $\vec{n} = x_1 \cdots x_k \in \mathcal H(n)$ is given by  the number of its blocks of $2$'s.
\end{lemma}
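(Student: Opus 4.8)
The plan is to read the children of $\vec{n}=x_1\cdots x_k$ directly off the three shapes of single-step reduction listed in \eqref{rew2}, and then to put them in bijection with the maximal nonempty runs of consecutive $2$'s (the ``blocks of $2$'s'') occurring in $\vec{n}$.

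First I would observe that, by \eqref{rew2}, every arc leaving $\vec{n}$ is obtained by applying one of the patterns $2\,\vec{y}\rightarrow 1\,0\,\vec{y}$, $\vec{x}\,0\,2\,\vec{y}\rightarrow\vec{x}\,1\,0\,\vec{y}$, or $\vec{x}\,1\,2\,\vec{y}\twoheadrightarrow\vec{x}\,2\,0\,\vec{y}$ at some position of $\vec{n}$; equivalently, a reduction is available at position $i$ precisely when $x_i=2$ and either $i=1$ or $i\geq 2$ with $x_{i-1}\in\{0,1\}$. This says exactly that the letter $x_i=2$ is the \emph{leftmost} letter of one of the blocks of $2$'s of $\vec{n}$: it is either $x_1$ itself, or it is immediately preceded by a non-$2$. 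Conversely, every block of $2$'s has a leftmost letter of precisely this kind and hence yields such a reduction; note there is no rewrite rule acting on the pattern $2\,2$, so the $2$'s internal to a block are inert. This produces a surjection from the set of blocks of $2$'s in $\vec{n}$ onto the set of arcs leaving $\vec{n}$.

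It then remains to see this surjection is injective, so that $o(\vec{n})$ equals the number of blocks. Applying the reduction at a position $i\geq 2$ alters $\vec{n}$ only in coordinates $i-1$ and $i$ (incrementing $x_{i-1}$ by $1$ and turning the $2$ in position $i$ into a $0$), so the resulting child determines the pair $\{i-1,i\}$, hence $i$; applying the reduction at $i=1$ produces the unique child of length $k+1$. Since the leftmost letters of two distinct blocks of $2$'s sit at positions differing by at least $2$, the two corresponding children are distinct (they either have different lengths, or differ in the position of one such leftmost letter, where one carries a $0$ and the other a $2$). Hence $o(\vec{n})$ is the number of blocks of $2$'s of $\vec{n}$. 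Alternatively, injectivity can be extracted from the fact, recorded among the properties of $A(n)$ in Section~\ref{s3}, that $A(n)$ is simple, together with the observation that no two of the reduction patterns fire at a common position. I do not expect a genuine obstacle; the only step demanding care is this injectivity claim, namely ruling out that two different blocks of $2$'s yield the same word by a coincidence of the increment-and-zero operation — which is exactly why I would pin down precisely which coordinates each child differs from $\vec{n}$ in.
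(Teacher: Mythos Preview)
Your argument is correct and follows essentially the same route as the paper: both identify the single-step reductions available at $\vec{n}$ with the leftmost letters of its blocks of $2$'s. The only cosmetic differences are that the paper prepends a $0$ (setting $x_0=0$) so that the case $i=1$ need not be treated separately, and that the paper leaves implicit the injectivity you spell out, simply asserting that the $\ell$ reductions are ``different''; your explicit check that distinct firing positions (separated by at least two coordinates) yield distinct children is a welcome clarification rather than a new idea.
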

\begin{proof} To simplify notation, we represent $\vec{n}$ by the word $0 \, x_1 \cdots x_k \in \Sigma^*$, and set $x_0=0$. Suppose $\vec{n}$ contains $\ell$ blocks of $2$'s, and let $x_{i_h}$ be the first $2$ of the $h$-th block. Since the elements $x_{i_1-1}, \dots , x_{i_{\ell}-1}$ are all in the set $\{0,1\}$, there are precisely $\ell$ different $\vec{R}$-single-step reductions operating on $\vec{n}$, the $h$-th of them transforming the word
$ x_{i_h-1} \, 2$  into $y_{i_h-1} \, 0$, where $y_{i_h-1} = x_{i_h-1}+1$.
 \end{proof}
 \begin{cor}\label{17} Let $\ell$ be the number of blocks of $2$'s in the minimal hyperbinary expansion $\vec{n}' \in \mathcal H(n)$. We have $v(A(n)) \geq \ell -1$.
 \end{cor}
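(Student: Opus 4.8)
The plan is to read the bound off directly from the two immediately preceding results, Proposition~\ref{15} and Lemma~\ref{16}, after isolating a degenerate case.

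First I would dispose of the case $n = 2^m-1$. Here the minimal expansion is $\vec{n}' = \underbrace{1\cdots 1}_{m}$, which contains no block of $2$'s, so $\ell = 0$; moreover $A(n)$ has no arcs by Corollary~\ref{co1}, hence $v(A(n)) = 0 \geq -1 = \ell - 1$, and the claim holds trivially.

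Now assume $n \neq 2^m-1$, so that Proposition~\ref{15} applies and yields
$$v(A(n)) = \sum_{\vec{n} \neq \vec{n}''} \bigl( o(\vec{n}) - 1 \bigr).$$
The key observation is that every summand is nonnegative: a node $\vec{n} \neq \vec{n}''$ is not $\vec{R}_{\mathcal H}$-irreducible (Proposition~\ref{ac}), so it has at least one child, i.e.\ $o(\vec{n}) \geq 1$. Consequently the whole sum is bounded below by the single term indexed by the root $\vec{n}'$ — and this term does occur in the sum, because $\vec{n}' \neq \vec{n}''$ exactly when $b(n) > 1$, i.e.\ precisely when $n \neq 2^m-1$ (Corollary~\ref{co1}). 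By Lemma~\ref{16}, $o(\vec{n}') = \ell$, so dropping all the other nonnegative terms gives $v(A(n)) \geq o(\vec{n}') - 1 = \ell - 1$.

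There is no genuine obstacle in this argument; the one point that needs a moment's care is verifying that the root is really a term of the sum, i.e.\ that $\vec{n}' \neq \vec{n}''$, which is exactly the reason the case $n = 2^m-1$ has to be handled separately at the outset.
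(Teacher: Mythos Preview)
Your argument is correct and is exactly the approach the paper intends: the paper's own proof consists of the single sentence ``Immediate from Proposition~\ref{15} and Lemma~\ref{16}.'' You have simply unpacked that immediacy, including the edge case $n=2^m-1$ (which Proposition~\ref{15} explicitly excludes) and the check that $\vec{n}'\neq\vec{n}''$ so that the root actually contributes a term to the sum.
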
 
 \begin{proof} Immediate from Proposition~\ref{15} and Lemma~\ref{16}.
 \end{proof}
We are now ready to identify those integers $n \in \N$ such that $ v(A(n))=0$.
\begin{thm}\label{t1} The graph $A(2m)$ is a tree if and only if $m= 2^t - \epsilon$ for a suitable $(t, \epsilon) \in \N \times \{0,1\}$.
\end{thm}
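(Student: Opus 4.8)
The plan is to reduce the statement to the structural decomposition of $A(2m)$ furnished by Proposition~\ref{11}, combined with a count of the bridging arcs. First I would observe that, as a coloured oriented graph, $A(2m)$ is the union of the copy $\phi'(A(m))$, the copy $\phi''(A(m-1))$, and the bridging arcs, which all run from the second copy to the first: indeed, a single-step reduction never rewrites the trailing letter of a word ending in $0$, and it rewrites the trailing letter of a word ending in $2$ only by turning that $2$ into a $0$; hence every arc of $A(2m)$ is an image of an arc of $A(m)$, an image of an arc of $A(m-1)$, or a bridging arc. Writing $B$ for the number of bridging arcs and using $v(G)=\lvert E(G)\rvert-\lvert V(G)\rvert+1$ together with \eqref{eq10}, this gives
\[
v(A(2m)) \;=\; v(A(m)) + v(A(m-1)) + (B-1).
\]
Since $v(A(m)),\,v(A(m-1))\ge 0$ and $B\ge 1$ (the binary expansion of $m-1$ ends in $0$ or $1$, so it is always one of the words counted by $B$), we conclude that $A(2m)$ is a tree if and only if $v(A(m))=v(A(m-1))=0$ and $B=1$.

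The second step is to compute $B$. A node $w2\in\phi''(\mathcal H(m-1))$ has exactly one bridging child when the word $w$ does not end in $2$ (apply a rewrite rule to its last two letters) and none otherwise, so $B$ is the number of hyperbinary expansions of $m-1$ not ending in $2$. If $m-1$ is odd they all end in $1$, so $B=b(m-1)$; if $m-1$ is even, the ones not ending in $2$ are exactly the ones ending in $0$, and these biject with the expansions of $(m-1)/2$, so $B=b((m-1)/2)$. By Corollary~\ref{co1}, $B=1$ holds precisely when $m-1$ is odd with $m-1=2^s-1$, or $m-1$ is even with $(m-1)/2=2^s-1$; sorting out the parities, this amounts exactly to $m=2^t$ or $m=2^t-1$, that is, $m=2^t-\epsilon$.

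The forward implication follows at once: if $A(2m)$ is a tree then $B=1$, hence $m=2^t-\epsilon$. For the converse I would use strong induction on $m$, the base $m=1$ being trivial. Suppose $m=2^t-\epsilon\ge 2$; then $B=1$ by the second step, and it remains to show $v(A(m))=v(A(m-1))=0$. If $\epsilon=0$ then $m-1=2^t-1$, so $A(m-1)$ is a single vertex by Corollary~\ref{co1}, while $A(m)=A(2\cdot 2^{t-1})$ is a tree by the inductive hypothesis applied to $2^{t-1}=2^{t-1}-0$. If $\epsilon=1$ then $A(m)=A(2^t-1)$ is a single vertex and $A(m-1)=A(2\cdot(2^{t-1}-1))$ is a tree by the inductive hypothesis applied to $2^{t-1}-1$. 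In either case $v(A(2m))=0$, so $A(2m)$ is a tree.

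The step I expect to be the main obstacle is the bridging-arc count: one must be careful that a node ending in $22$ contributes nothing, treat the two parities of $m-1$ separately, and check that the match with Corollary~\ref{co1} yields exactly the two families $2^t$ and $2^t-1$ and nothing more. The only genuinely degenerate point --- the value attached to $\mathcal H(0)$ when $2m=2$ --- is conveniently absorbed into the base case of the induction.
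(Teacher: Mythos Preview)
Your proof is correct and takes a genuinely different route from the paper's.

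The paper argues directly from the shape of the minimal expansion $\vec{n}'$ of $2m$: Corollary~\ref{17} forces $\vec{n}'=1^{h}2^{k-h}$, and a short case analysis on $(h,k-h)$ either identifies $m$ as $2^{k}-1$ or $2^{h}$ and checks by hand that $A(2m)$ is a path, or else exhibits an explicit $5$-vertex subgraph (containing a $4$-cycle) inside $A(2m)$. Your argument instead exploits Proposition~\ref{11} to obtain the exact recursion
\[
v(A(2m))=v(A(m))+v(A(m-1))+(B-1),
\]
computes $B$ as $b(m-1)$ or $b((m-1)/2)$ according to parity, and invokes Corollary~\ref{co1} to pin down $B=1$; the converse then follows by a clean strong induction. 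Each approach has its merits: the paper's is self-contained and visual, needing no induction, while yours avoids ad hoc subgraph hunting and produces a recursive formula for $v(A(2m))$ that is itself of interest (indeed, it would streamline the analysis of Section~\ref{s6} as well). The one place to be slightly more explicit is the parity check after applying Corollary~\ref{co1}: you should note that $m-1=2^{s}-1$ is indeed odd for $s\ge 1$ and that $m-1=2(2^{s}-1)$ is indeed even for $s\ge 1$, so the two families do not overlap with the wrong parity case.
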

\begin{proof} By Corollary~\ref{17}, the cyclomatic number $v(A(2m))$ is possibly $0$ only if the minimal hyperbinary expansion
$\vec{n}' \in \mathcal H(2m)$ is equal to
$$ \underbrace{1 \dots 1}_{\text{$h$ times}} \; \underbrace{2 \dots 2}_{\text{$k \text{-} h$ times}}  \quad \text{for $0\leq   h < k$.}$$
When $h=0$, we get $m=2^k-1$. The binary expansion of $2m$ is  $ \underbrace{1 \dots 1}_{\text{$k$ times}} \, 0$, and the graph $A(2m)$ is actually a tree with $k+1$ nodes.

Suppose now $h>0$. If $k-h=1$, we get $m=2^h$, and $A(2m)$ is again a tree with $k+1$ nodes.
Suppose finally $h>0$ and $k-h>1$. In this case
$\vec{n}'= \vec{x} \, 1 \, 2 \, 2 \, \vec{y}$ for suitable  $\vec{x}$ and $\vec{y}$ in $\Sigma^*$. This implies that
 $A(2m)$ contains a subgraph isomorphic to either $G(1)$ or $G(2)$ below
 \medskip

\begin{center}
\begin{tikzpicture}[
vertex_style/.style={circle,shading=ball,ball color=red,draw=red!80!white,
%drop shadow={opacity=0.1}
},
vertex2_style/.style={circle,shading=ball,ball color=green,draw=green!80!white,
%drop shadow={opacity=0.1}
},
vertex3_style/.style={circle,shading=ball,ball color=yellow,draw=yellow!80!white,drop shadow={opacity=0.0}},
edge_style/.style={ultra thick, black, drop shadow={opacity=0.1}},
edge_style/.style={ultra thick, black, drop shadow={opacity=0.1}},  
nonterminal/.style={
% The shape:
rectangle,
% The size:
minimum size=6mm,
% The border:
very thick,
draw=red!50!black!50, % 50% red and 50% black,
% and that mixed with 50% white
% The filling:
top color=white, % a shading that is white at the top...
bottom color=red!50!black!20, % and something else at the bottom
% Font
font=\itshape
} ]
 \node[vertex2_style] (A)  at (0,0)  [label=left:$122 \,\vec{y}$]{} ;
     \node[vertex3_style] (B) at (0,1)  [label=left:$202 \, \vec{y}$] {};
     \node[vertex3_style] (C) at (-1,2)  [label=left:$1002 \, \vec{y}$] {};
         \node[vertex3_style] (D) at (1,2)  [label=right:$210\, \vec{y}$] {};
          \node[vertex3_style] (E)  at (0,3)  [label=left:$1010 \, \vec{y}$]{} ;
        \path (A) edge [->>, thick]  (B);
           \path [->, thick] (B) edge (C);     
            \path [->, thick] (B) edge (D);    
          \path [->, thick] (C) edge (E); 
        \path  (D) edge [->, thick]  (E);
             \node[nonterminal] (G) at (-3,0.5) {$G(1) \cong A(10)$};
\end{tikzpicture}
\qquad \quad 
\begin{tikzpicture}[
vertex_style/.style={circle,shading=ball,ball color=red,draw=red!80!white,
%drop shadow={opacity=0.1}
},
vertex2_style/.style={circle,shading=ball,ball color=green,draw=green!80!white,
%drop shadow={opacity=0.1}
},
vertex3_style/.style={circle,shading=ball,ball color=yellow,draw=yellow!80!white,drop shadow={opacity=0.0}},
edge_style/.style={ultra thick, black, drop shadow={opacity=0.1}},
edge_style/.style={ultra thick, black, drop shadow={opacity=0.1}},  
nonterminal/.style={
% The shape:
rectangle,
% The size:
minimum size=6mm,
% The border:
very thick,
draw=red!50!black!50, % 50% red and 50% black,
% and that mixed with 50% white
% The filling:
top color=white, % a shading that is white at the top...
bottom color=red!50!black!20, % and something else at the bottom
% Font
font=\itshape
} ]
 \node[vertex2_style] (A)  at (0,0)  [label=left:$1122 \, \vec{y}$]{} ;
     \node[vertex3_style] (B) at (0,1)  [label=left:$1202 \, \vec{y}$] {};
     \node[vertex3_style] (C) at (-1,2)  [label=left:$2002 \, \vec{y}$] {};
         \node[vertex3_style] (D) at (1,2)  [label=right:$1210 \, \vec{y}$] {};
          \node[vertex3_style] (E)  at (0,3)  [label=left:$2010 \, \vec{y}$]{} ;
        \path (A) edge [->>, thick]  (B);
           \path [->>, thick] (B) edge (C);     
            \path [->, thick] (B) edge (D);    
          \path [->, thick] (C) edge (E); 
        \path  (D) edge [->>, thick]  (E);
             \node[nonterminal] (G) at (-3,0.5) {$G(2)$};
\end{tikzpicture}\end{center}
\bigskip
\begin{center}
\begin{tikzpicture}
[nonterminal/.style={
% The shape:
rectangle,
% The size:
minimum size=6mm,
% The border:
very thick,
draw=red!50!black!50, % 50% red and 50% black,
% and that mixed with 50% white
% The filling:
top color=white, % a shading that is white at the top...
bottom color=red!50!black!20, % and something else at the bottom
% Font
font=\itshape
}]
%\node[nonterminal] (G) at (-2,0.5) {$A(2^k-1)$};
%\shade[top color=red, bottom color=green, draw=green!80!red,] (0,0.5) circle (.18cm) ;
  %\node (H) at (1,0.2) {$\underbrace{11 \cdots 1}_{k \; \text{times}}$};
\end{tikzpicture}
\end{center}
\noindent
depending whether $\vec{x}$ is empty or not.
In both occurences, $1 = v(G(i)) \leq v(A(m))$, hence $A(m)$ is not a tree. 
\end{proof}
\begin{thm}\label{t2} The graph $A(n)$ is a tree if and only if there exists $(s,t) \in \N_0 \times \N_0$ such that
\begin{equation}\label{e12} n = 2^{s+t+1} \Mypm 2^s-1>0. \end{equation}
\end{thm}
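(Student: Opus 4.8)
The plan is to bootstrap from the already established Theorem~\ref{t1}, using Proposition~\ref{10} to replace $A(n)$ by an isomorphic copy attached to an \emph{even} integer, after which Theorem~\ref{t1} applies directly.

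The first step is an elementary reduction. Write $n+1 = 2^{j} r$ with $r$ odd, so that $j$ is the $2$-adic valuation of $n+1$. If $r=1$, i.e.\ $n = 2^{k}-1$, then $A(n)$ is the one-vertex graph by Corollary~\ref{co1}, hence a tree, and $n = 2^{s+t+1}-2^{s}-1$ with $(s,t)=(k,0)$, so \eqref{e12} holds. If instead $r\geq 3$, then iterating the isomorphism $A(2\mu+1)\cong A(\mu)$ of Proposition~\ref{10} exactly $j$ times (nothing happens when $n$ is even) yields $A(n)\cong A(m)$ with $m := r-1$ even and $m\geq 2$. Applying Theorem~\ref{t1} to $A(m)=A(2\mu)$, where $\mu = m/2$, then shows that $A(n)$ is a tree if and only if $\mu = 2^{t}-\epsilon$ for some $t\in\N$ and $\epsilon\in\{0,1\}$; equivalently $r = m+1 = 2^{t+1}+1$ (when $\epsilon=0$) or $r = 2^{t+1}-1$ (when $\epsilon=1$), always with $t\geq 1$.

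Substituting back through $n = 2^{j} r - 1$ and setting $s = j$ converts these two possibilities into $n = 2^{s+t+1}+2^{s}-1$ and $n = 2^{s+t+1}-2^{s}-1$, both with $t\geq 1$. Together with the case $n=2^{k}-1$ handled above (which is the minus family with $t=0$), this gives that every $A(n)$ which is a tree has $n$ of the form \eqref{e12}. For the converse, given $n = 2^{s+t+1}\pm 2^{s}-1>0$ one computes $n+1 = 2^{s}\bigl(2^{t+1}\pm 1\bigr)$, reads off $j = s$ and $r = 2^{t+1}\pm 1$, and runs the same chain of equivalences backwards; in the two degenerate cases ($t=0$ in the plus family, where $n = 3\cdot 2^{s}-1$ and $r=3$, and $t=0$ in the minus family, where $n = 2^{s}-1$ and $r=1$) one reaches an even $m$ covered by Theorem~\ref{t1} or the one-vertex graph, respectively. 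The only mildly delicate point I anticipate is this boundary bookkeeping — reconciling the range $t\in\N$ forced by Theorem~\ref{t1} with the range $t\in\N_0$ allowed in \eqref{e12}, observing that the hypothesis $n>0$ merely discards the pair $(s,t)=(0,0)$ in the minus family, and noting that the plus family with $t=0$ already lies inside the minus family with $t=1$. I do not expect a genuine obstacle, since all the structural content is contained in Proposition~\ref{10} and Theorem~\ref{t1}, and the remaining work is the arithmetic translation of ``$m$ even with $A(m)$ a tree'' through the relation $n+1 = 2^{j}(m+1)$.
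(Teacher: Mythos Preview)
Your proposal is correct and follows essentially the same strategy as the paper: strip trailing $1$'s via Proposition~\ref{10} to reduce to an even integer, then invoke Theorem~\ref{t1}. Your parametrization through the $2$-adic valuation of $n+1$ is a little more uniform than the paper's split into the even case and the odd case (where the paper reads off the trailing block of $1$'s in $\vec{n}'$ and first invokes Corollary~\ref{17} as a preliminary filter), but the content of the argument is the same, and your boundary bookkeeping --- the coincidence of the plus family at $t=0$ with the minus family at $t=1$, and the exclusion $(s,t)=(0,0)$ in the minus family --- is handled correctly.
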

\begin{proof} Suppose first $n$ be even. By Theorem~\ref{t1}, the graph $A(n)$ is a tree if and only if
$n=2(2^t-\epsilon)$ for a suitable $(t, \epsilon) \in \N \times \{0,1\}$, and these are precisely the numbers 
appearing in Equation~\eqref{e12} when $s=0$.

Suppose now $n$ be odd. By Corollary~\ref{17}, the graph $A(n)$ is possibly a tree only if the number of blocks of $2$'s in the minimal hyperbinary expansion $\vec{n}'$ of $n$ is at most $1$. If there are no $2$'s in $\vec{n}'$, then $n =2^s-1$ for a suitable $s>0$, and $A(n)$ is actually a tree (with a single node and no arcs).  Such class of integers is obtained in Equation~\eqref{e12} by setting $t=0$ and read `$\Mypm$' as `$-$'. 

If instead
$$ \vec{n}' = x_1 \cdots x_k \, 2 \, \underbrace{1 \dots 1}_{\text{$s$ times}} \, ,$$
then the graph $A(n)$ is isomorphic to $A(m)$ with
$$m=  \frac{n - 2^{s}+1}{2^s},$$
as a consequence of Proposition~\ref{10}.

Note that $m$ is even, since its minimal hyperbinary expansion end with a $2$. 
By Theorem~\ref{t1} the graph $A(m)$ is a tree if and only if $m= 2(2^t - \epsilon)$ for a suitable $(t, \epsilon) \in \N \times \{0,1\}$. Equation~\eqref{e12} (with $t>0$) now comes by solving
\begin{equation}\label{e13}  2(2^t - \epsilon) = \frac{n - 2^{s}+1}{2^s} \end{equation}
with respect to $n$.
\end{proof}
\begin{remark} Accomplished the proof of Theorem~\ref{t2}, the careful reader may have some doubts whether the graphs $A(2^{s+1}+2^s-1)$ for $s>0$ are actually trees. The answer, coherently with the statement of Theorem~\ref{t2}, is positive.
It can be verified directly, or by recognizing that
the integers at hand come out from Equation~\eqref{e13} when $t=1$ and $\epsilon=1$; if fact they can also be written  as $2^{s+2}-2^s-1$.
\end{remark}

 Theorems~\ref{t2} holds to a nice  formula involving both the function $b$ and the map $\omega$.
 \begin{cor} Let $n$ be any positive integer. Denoted by $\vec{n}'$ and $\vec{n}''$ the root and the sink of $A(n)$, the following inequality holds.
 \begin{equation}\label{e14} b(n) \geq \omega (\vec{n}') -   \omega (\vec{n}'')+1.
 \end{equation}
 The equality in \eqref{e14} holds if and only if $n = 2^{s+t+1} \Mypm 2^s-1$ for a suitable $(s,t) \in \N_0 \times \N_0$.
 \end{cor}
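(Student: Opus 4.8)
The plan is to show that the inequality \eqref{e14} simply records the fact that the nodes of $A(n)$ occupy $\omega(\vec{n}')-\omega(\vec{n}'')+1$ rows, and that equality means $A(n)$ is a path; Theorem~\ref{t2} then identifies exactly when this happens. Set $R:=\omega(\vec{n}')-\omega(\vec{n}'')+1$. Since every single-step reduction lowers $\omega$ by one (Lemma~\ref{l1}) and $\vec{n}'$, $\vec{n}''$ realise the maximum and the minimum of $\omega$ on $\mathcal H(n)$, the value $R$ is the number of distinct integers taken by $\omega$ on $\mathcal H(n)$. By Corollary~\ref{conf}(i) there is a reduction $\vec{n}'=u_0\to u_1\to\cdots\to u_d=\vec{n}''$ whose nodes are pairwise distinct (Proposition~\ref{comp}), and $d=\omega(\vec{n}')-\omega(\vec{n}'')$ because each arc drops $\omega$ by one; hence $b(n)=\lvert\mathcal H(n)\rvert\ge d+1=R$, which is \eqref{e14}. (For $n=2^k-1$ one has $\vec{n}'=\vec{n}''$ and the inequality reads $1\ge 1$.)

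For the equality statement, the key point is the equivalence: $b(n)=R$ if and only if $A(n)$ is a tree. For the forward direction, note that the level sets of $\omega$ partition $\mathcal H(n)$ into $R$ nonempty classes, each containing one of the $u_i$; so $b(n)=R$ forces $\mathcal H(n)=\{u_0,\dots,u_d\}$, and since an arc joins nodes whose $\omega$-values differ by exactly one (Lemma~\ref{l1}) while $\omega(u_0)>\cdots>\omega(u_d)$ are consecutive integers, the arcs are precisely $u_i\to u_{i+1}$; thus $A(n)$ is a directed path, in particular a tree. Conversely, if $A(n)$ is a tree then $v(A(n))=0$, so by Proposition~\ref{15} every node other than $\vec{n}''$ has exactly one child; chasing these unique children out of the root gives the path $u_0\to\cdots\to u_d$, and the flowchart property of $A(n)$ (every node is reachable from $\vec{n}'$) forces $\mathcal H(n)=\{u_0,\dots,u_d\}$, so $b(n)=d+1=R$.

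Combining the two halves, $b(n)=\omega(\vec{n}')-\omega(\vec{n}'')+1$ exactly when $A(n)$ is a tree, and by Theorem~\ref{t2} this is the case if and only if $n=2^{s+t+1}\Mypm 2^s-1$ for some $(s,t)\in\N_0\times\N_0$. I expect the only point needing care to be the step showing $b(n)=R$ rules out \emph{any} branching (and hence cannot be realised by a tree other than a path); this is exactly where Proposition~\ref{15}---equivalently Corollary~\ref{17}---is used, since a vanishing cyclomatic number forbids a node with two children.
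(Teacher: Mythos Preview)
Your proof is correct and follows essentially the same route as the paper: the number $\omega(\vec{n}')-\omega(\vec{n}'')+1$ counts the rows of the layout, each row is nonempty, and equality holds exactly when each row contains a single node, which you then identify with $A(n)$ being a tree before invoking Theorem~\ref{t2}. The paper compresses all of this into two sentences, merely asserting that ``there is a single node on each row if and only if $A(n)$ is a tree''; your version supplies the details of that equivalence (via Lemma~\ref{l1} for the forward direction and Proposition~\ref{15} plus the flowchart property for the converse), which is a welcome expansion rather than a different argument. One small remark: in your final paragraph you locate the use of Proposition~\ref{15} in the step ``$b(n)=R$ rules out branching'', but in fact your forward direction already shows directly that $A(n)$ is a path without appealing to cyclomatic numbers; Proposition~\ref{15} is only needed in the converse (tree $\Rightarrow$ path $\Rightarrow$ $b(n)=R$).
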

 \begin{proof} Remember that the number  $\omega (\vec{n}') -   \omega (\vec{n}'')+1$ counts the rows along which the $b(n)$ nodes of $A(n)$ are arrayed. Moreover, there is a single node on each row if and only if $A(n)$ is a tree. Now apply Theorem~\ref{t2}.
 \end{proof}
\section{When $\pi_1(A(n))$ is $\Z$}\label{s6}
Let $S_1$ be the set of integers $n>0$ such that $v(A(n))=1$.
The minimal $n$ in $S_1$ is $10$, followed by $12$. Proposition~\ref{10} implies that 
\begin{equation}\label{15} T= \{ \; 2^{\ell} (12 \Mypm 1) -1 \; \vert \; \ell \geq 0 \; \} \subseteq S_1.
\end{equation}
We intend to show that the set $T$ is actually equal to $S_1$.

Fixed any $n \in S_1$, 
 we know by Corollary~\ref{17} that the number of blocks of $2$'s in the  root $\vec{n}'$ of $A(n)$ is either $1$ or $2$. If the former is the case, we can assume that $ \vec{n}' $ is represented by the word 
$$  \underbrace{1 \dots 1}_{\text{$h$ times}} \;  \underbrace{2 \dots 2}_{\text{$k$ times}} \;  \underbrace{1 \dots 1}_{\text{$\ell$ times}},$$
where $h>0$ and $k >1$ (otherwise $A(n)$ would be a tree). 
It follows that $A(n)$ contains a subgraph $G(3)$ rooted in $$ \underbrace{1 \dots 1}_{\text{$h$ times}} \;  2 \; 2 \, , $$ and hence isomorphic to $A(2^{h+2}+2)$. By a direct analysis we get 
$$h=v(G(3)) \leq v(A(n))=1.$$ 

The cyclomatic number of the graph $A(n)$ when
$$ \vec{n}' = 1 \,  \underbrace{2 \dots 2}_{\text{$k$ times}} \;  \underbrace{1 \dots 1}_{\text{$\ell$ times}}$$
can be computed by hand, and turns out to be $k-1$. 
 Thus we can infer that $k=2$.  Hence
$$ n = 2^{\ell} (10+1) -1 = 2^{\ell} (12 - 1) -1 \in T$$
as we claimed. 

Assume now $\vec{n}'$ having two blocks of $2$'s, separated by a block of $1$'s of length $k>0$. 
By Lemma~\ref{16}, the expansion $\vec{n}'$ has two children. All the others branching nodes in $A(n)$ have consequently just one child by Proposition~\ref{15}, and hence just one block of $2$'s.
This is only possible if 
 $$ \vec{n}' = 2 \, 1 \, 2 \,  \underbrace{1 \dots 1}_{\text{$\ell$ times}}\, ,$$
 i.e.\ $n = 2^{\ell} (12 + 1) -1 \in T.$

 By the equality of sets $T= S_1$ we also deduce that all graphs $A(n)$ such that $\pi_1(A(n))=\Z$ are either isomorphic to $A(10)$ or $A(12)$.
\bigskip

\end{document}